\numberwithin{equation}{section}
\pgfplotsset{compat=1.17}
\newcommand{\lf}{\lfloor}
\newcommand{\rf}{\rfloor}
\newcommand{\bQ}{\mathbb{Q}}
\newcommand{\bR}{\mathbb{R}}
\newcommand{\bZ}{\mathbb{Z}}
\newcommand{\bC}{\mathbb{C}}
\newcommand{\bN}{\mathbb{N}}
\newcommand{\bP}{\mathbb{P}}
\newcommand{\cC}{\mathcal{C}}
\newcommand{\cF}{\mathcal{F}}
\newcommand{\cG}{\mathcal{G}}
\newcommand{\cO}{\mathcal{O}}
\newcommand{\cP}{\mathcal{P}}
\newcommand{\qlin}{\sim_{\mathbb{Q}}}
\newcommand{\rvol}{\operatorname{vol}_{X|S}}
\newcommand{\Prime}{^{\prime}}
\newcommand{\NE}{\mathrm{NE}}
\newcommand{\Supp}{\mathrm{Supp}}
\newcommand{\Ivol}{\mathrm{Ivol}}
\newcommand{\vol}{\operatorname{vol}}
\newcommand{\Aut}{\operatorname{Aut}}
\newcommand{\Hilb}{\operatorname{Hilb}}
\newcommand{\Univ}{\operatorname{Univ}}
\newcommand{\lct}{\operatorname{lct}}
\newcommand{\Spec}{\operatorname{Spec}}
\newcommand{\mult}{\operatorname{mult}}
\theoremstyle{plain} 
\newtheorem{thm}{Theorem}[section] 
\newtheorem{lemma}[thm]{Lemma}
\newtheorem{prop}[thm]{Proposition}
\newtheorem{cor}[thm]{Corollary}
\theoremstyle{definition} 
\newtheorem{defn}[thm]{Definition} 
\newtheorem{nota}[thm]{Notation}
\newtheorem{rem}[thm]{Remark}
\begin{document}
	
	\title{Boundedness of klt Good Minimal Models}
	\author{Xiaowei Jiang}
	\email{jxw20@mails.tsinghua.edu.cn}
	\address{Yau Mathematical Sciences Center, Tsinghua University, Beijing, China}
	\date{\today}
	\subjclass[2020]{14E30, 14J10,  14J40}
	\keywords{good minimal models, boundedness, moduli spaces}
	
	\thanks{}

	\begin{abstract}
For good minimal models with klt singularities, polarized by Weil divisors that are relatively nef and big over the bases of the Iitaka fibration, we show that, after fixing appropriate numerical invariants, they form a bounded family. As an application, we construct separated coarse moduli spaces for klt good minimal models polarized by line bundles.
		
	\end{abstract}
	
	\maketitle
	\tableofcontents
	\section{Introduction}
	Throughout this paper,  we  work over an algebraically closed field $\mathbbm{k}$ of characteristic zero.
	\vspace*{20pt}

	The central problem in birational geometry is the classification of algebraic varieties. According to the standard minimal model conjecture and the abundance conjecture, any variety $Y$ with mild singularities is birational to a variety $X$ such that either $X$ admits a Mori–Fano fibration $X \to Z$ or $X$ is a good minimal model, that is, $K_X$ is semiample. Therefore, canonically polarized varieties, Fano varieties, Calabi–Yau varieties, and their iterated fibrations play a central role in birational geometry.
    
One of the main problems in the classification of algebraic varieties is whether there are only finitely many families of such objects after fixing certain numerical invariants; in other words, whether they form a bounded family. Establishing the boundedness of a given class of varieties is a natural first step toward constructing the corresponding moduli space.
	
	For canonically polarized varieties, boundedness was established in \cite{haconBirationalAutomorphismsVarieties2013, haconACCLogCanonical2014, haconBoundednessModuliVarieties2018}, while for Fano varieties it was proved in \cite{birkarAntipluricanonicalSystemsFano2019, birkarSingularitiesLinearSystems2021}. However, for Calabi–Yau varieties there is no natural choice of polarization, and in general they are not bounded in the category of algebraic varieties. For example, projective K3 surfaces and abelian varieties of any fixed dimension are not bounded. Nevertheless, there has been recent progress toward the (birational) boundedness of fibered Calabi–Yau varieties and rationally connected Calabi–Yau varieties; see \cite{birkarSingularitiesFanoFibrations2023,birkarBoundednessEllipticCalabiYau2024, filipazziBoundednessEllipticCalabi2024,jiangBoundednessPolarizedLog2025,engelBoundednessFiberedKtrivial2025}.
	 
	When studying the moduli of Calabi–Yau varieties, one typically fixes a polarization despite its non-uniqueness. Recently, Birkar established the following boundedness results for polarized Calabi–Yau varieties, which are crucial for constructing moduli spaces of such polarized varieties.
	
\begin{thm}[\cite{birkarGeometryPolarisedVarieties2023}] \label{thm:bdd of pcy}
    Let $d \in \bN$, $u \in \bQ^{>0}$, and $\Phi \subset \bQ^{\geq 0}$ be a DCC set. Consider Calabi--Yau pairs $(X,B)$ and $\bQ$-Cartier Weil divisors $A$ on $X$. Then the following hold:

    \begin{enumerate}
        \item[(1)] (klt case) If
        \begin{itemize}
            \item $(X,B)$ is a klt pair of dimension $d$,
            \item the coefficients of $B$ are contained in $\Phi$,
            \item $A$ is a nef and big  divisor on $X$ such that $\vol(A) = u$,
        \end{itemize}
        then the set of such $(X,B)$ forms a bounded family.  If in addition  $A\geq 0$, then the set of such  $(X,B+A)$ also  forms a bounded family.

        \item[(2)] (slc case) If
        \begin{itemize}
            \item $(X,B)$ is an slc pair of dimension $d$,
            \item the coefficients of $B$ are contained in $\Phi$,
            \item $A$ is an ample divisor on $X$ such that $\vol(A) = u$,
            \item $A\geq 0$ does not contain any non-klt center of $(X, B)$,
        \end{itemize}
        then the set of such  $(X,B+A)$ forms a bounded family.
    \end{enumerate}
\end{thm}

    Since the boundedness results for good minimal models of maximal and minimal Kodaira dimension have been established, it remains to investigate good minimal models with intermediate Kodaira dimension. Recently, Birkar proved the following boundedness result for slc good minimal models polarized by effective Weil divisors that are relatively ample over the bases of the Iitaka fibration, and he constructed their projective coarse moduli spaces. 

\begin{thm}[\cite{birkarModuliAlgebraicVarieties2022}]\label{thm:bdd of smm}
Let $d \in \bN$, $\Phi \subset \bQ^{\geq 0}$ be a DCC set, $\Gamma \subset \bQ^{>0}$ be a finite set, and $\sigma \in \bQ[t]$ be a polynomial. Consider pairs $(X,B)$ and $\bQ$-Cartier Weil divisors $A$ on $X$ satisfying the following conditions:
\begin{itemize}
    \item $(X,B)$ is an slc pair of dimension $d$, 
   
    \item the coefficients of $B$ are contained in $\Phi$,
     \item $K_X+B$ is semiample, defining a contraction $f:X \to Z$, 
    \item $A$ is a divisor on $X$ that is ample over $Z$,
    \item $\vol(A|_F) \in \Gamma$, where $F$ is any general fiber of $f:X \to Z$ over any irreducible component of $Z$,
    \item $(K_X+B+tA)^d = \sigma(t)$, and
    \item $A\ge 0$ does not contain any non-klt center of $(X, B)$,
\end{itemize}
then the set of such  $(X,B+A)$ forms a bounded family.
\end{thm}

Theorem \ref{thm:bdd of smm} can be regarded as a relative version of Theorem \ref{thm:bdd of pcy}(2). One naturally wonders whether a relative version of Theorem \ref{thm:bdd of pcy}(1) exists; that is, for klt pairs $(X,B)$, the polarization $A$ need not be an effective divisor. This paper addresses this question and uses it to construct the moduli space of klt good minimal models of arbitrary Kodaira dimension, polarized by line bundles that are relatively ample over the bases of the Iitaka fibration (see Appendix \ref{sec:moduli}).

\begin{thm} \label{thm:boundedness}
Let $d \in \bN$, $\Phi \subset \bQ^{\geq 0}$ be a DCC set, $\Gamma \subset \bQ^{>0}$ be a finite set,  and $\sigma \in \bQ[t]$ be a polynomial. Let  $\cG_{klt}(d, \Phi, \Gamma, \sigma)$  be the set of  pairs $(X,B)$ and $\bQ$-Cartier Weil divisors $A$ on $X$ satisfying the following conditions:
\begin{itemize}
    \item $(X,B)$ is a klt pair of dimension $d$, 
   
    \item the coefficients of $B$ are contained in $\Phi$,
     \item $K_X+B$ is semiample, defining a contraction $f:X \to Z$, 
    \item $A$ is a divisor on $X$ that is nef and big over $Z$,
    \item $\vol(A|_F) \in \Gamma$, where $F$ is the general fiber of $f:X \to Z$, and
    \item $(K_X+B+tA)^d = \sigma(t)$.
\end{itemize}
Then the set of such  $(X,B)$ forms a bounded family.   If in addition  $A\geq 0$, then the set of such  $(X,B+A)$ also  forms a bounded family.
\end{thm}

	Recently, there have been some other related results on the (birational) boundedness   of klt good minimal models, see  \cite{filipazziInvariancePlurigeneraBoundedness2020, filipazziBoundednessEllipticCalabi2024,liBoundednessBaseVarieties2024,hashizumeBoundednessModuliSpaces2025,jiaoBoundednessPolarizedLog2025,zhuBoundednessStableMinimal2025,jiangBoundednessPolarizedLog2025}.

\begin{rem}
In Theorems \ref{thm:bdd of smm} and \ref{thm:boundedness}, conjecturally, the condition on $\vol(A|_F)$ can be removed; this is related to the effective b-semiampleness conjecture \cite[Conjecture 7.13]{prokhorovSecondMainTheorem2009}, and related discussions can be found in \cite{birkarBoundednessVolumeGeneralised2021, birkarModuliAlgebraicVarieties2022}. Note that the condition that $(X,B)$ is klt cannot be replaced by lc \cite[\S 4.2]{haconFailureBoundednessGeneralised2025}. While in Theorem \ref{thm:bdd of smm} the condition on fixing $\sigma(t)$ cannot be replaced by fixing only $\Ivol(K_X+B)$ \cite[\S 1]{birkarVariationsGeneralisedPairs2022}, one expects that in the klt case of Theorem \ref{thm:boundedness} such a replacement may be possible \cite{jiangBoundednessPolarizedLog2025}; however, in this case the polarization $A$ cannot be controlled as Theorem \ref{Thm C}.
\end{rem}

\noindent\textbf{Description of the proof.} 
\renewcommand{\thethm}{\Alph{thm}} 
\renewcommand{\theHthm}{\Alph{thm}} 
\setcounter{thm}{0} 

\begin{thm}[Boundedness of nef threshold]\label{Thm A}
Under the same assumptions as Theorem \ref{thm:boundedness}, 
there exists a positive rational number $\tau$, depending only on $(d, \Phi, \Gamma, \sigma)$, such that $K_X + B + tA$ is nef and big for all $0 < t < \tau$.

\end{thm}

\begin{thm}[Boundedness of pseudo-effective thereshold]\label{Thm B}
   Under the same assumptions as Theorem \ref{thm:boundedness}, there exists a positive rational number $ \lambda$, depending only on $(d, \Phi, \Gamma, \sigma)$, such that $K_X + B + tA$ is big for all $0 < t < \lambda$.
\end{thm}

\begin{thm}\label{Thm C}
Under the same assumptions as Theorem \ref{thm:boundedness}, there exist a natural number $r$ depending only on $(d, \Phi, \Gamma, \sigma)$ and a very ample divisor $H$ on $X$ such that
    $$
    H^d \leq r, \quad (K_X + B) \cdot H^{d-1} \leq r, \quad \text{and} \quad H - A \text{ is pseudo-effective}.$$
    
    In particular, by Lemma \ref{lem:bir19 2.20}, the set of such $(X,B)$ forms a bounded family.  If in addition  $A\geq 0$, then the set of such  $(X,B+A)$ also forms  a bounded family.

\end{thm}
\renewcommand{\thethm}{\thesection.\arabic{thm}}
\renewcommand{\theHthm}{\thesection.\arabic{thm}} 
\setcounter{thm}{2} 

It is clear that Theorem \ref{Thm A} implies Theorem \ref{Thm B}, while Theorem \ref{Thm C} yields Theorem \ref{thm:boundedness}.
The proof of Theorem \ref{Thm A}, \ref{Thm B} and \ref{Thm C} proceeds by induction on the dimension of $X$:

\begin{itemize}
  
	\item  $\text{Theorem \ref{Thm B}}_{d}$ +  $\text{Theorem \ref{Thm C}}_{d-1}$ $\implies$ $\text{Theorem \ref{Thm A}}_d$; cf.  (\ref{thm:bdd of tra smm with bdd peff threshold}).
	\item  $\text{Theorem \ref{Thm A}}_{d-1}$  $\implies$ $\text{Theorem \ref{Thm B}}_{d}$; cf. (\ref{prop: bdd peff threshold}).

	\item $\text{Theorem \ref{Thm A}}_{d}$ $\implies$  $\text{Theorem \ref{Thm C}}_{d}$; cf. (\ref{thm:bdd of tra strongly smm}).
\end{itemize}

\noindent	\textbf{Acknowledgement.}
The author expresses gratitude to Professor Junchao Shentu for discussions on \cite{shentuArakelovTypeInequalities2023}, and thanks his advisor Professor Caucher Birkar for generously sharing his survey note \cite{birkarBoundednessModuliAlgebraic2024}, which motivated the author to consider the problem in this paper. He also appreciates Professor Birkar for his constant support and helpful discussions. The author thanks Bingyi Chen, Minzhe Zhu, and Yu Zou for reading this paper and providing valuable suggestions, and Jia Jia, Junpeng Jiao, and Santai Qu for their useful comments.

	\section{Preliminaries}

	\textbf{Notations and conventions.}
	We collect some notations and conventions used in this paper.
	\begin{enumerate}
	
        
	\item A projective morphism $f:X\to Z$ of schemes is called a \textit{contraction} if $f_*\cO_X=\cO_Z$ ($f$ is not necessarily birational). In particular, $f$ is surjective with connected ﬁbers.

		\item Suppose that $X$ is a normal variety.  Let $D_1$ and  $D_2$ be $\bQ$-Cartier $\bQ$-divisors on $X$. We say that $D_1$ and $D_2$ are $\bQ$-linear equivalent, denoted by $D_1\qlin D_2$, if there exists $m\in\bZ_{>0}$ such that $mD_1$ and $mD_2$ are Cartier and $mD_1\sim mD_2$. Moreover,
		fixed $l\in\bZ_{>0}$, the notation $D_1\sim_l D_2$ means that $lD_1\sim lD_2$.

        \item  Let $f : X \to Z$ be a morphism between  normal varieties, and let $M$ and $L$ be $\bQ$-Cartier divisors on $X$. We say $M\sim L/Z$ (resp. $M \sim_{\bQ}  L/Z$) if there is a Cartier (resp. $\bQ$-Cartier)  divisor $N$ on $Z$ such that $M - L \sim f^*N$ (resp.  $M - L \sim_{\bQ} f^*N$). 
        
        	\item We say that a set  $\Phi \subset\bQ$ satisﬁes the \textit{descending chain condition}  (DCC, for short) if $\Phi$  does not contain any strictly decreasing inﬁnite sequence. Similarly, we say that a set  $\Phi \subset\bQ$  satisﬁes the \textit{ascending chain condition}  (ACC, for short) if $\Phi$ does not contain any strictly increasing inﬁnite sequence.
		
		\item     Let $X$ be a normal variety. A \textit{b-divisor}  over $X$ is a collection of $\bQ$-divisors $M_Y$ on $Y$ for each birational contraction $Y \to X$ from a normal variety that are compatible with respect to pushdown, that is, if $Y' \to X$ is another birational contraction and $\psi: Y' \dashrightarrow  Y$ is a morphism, then $ \psi_*M_{Y'} = M_Y$.

    	A b-divisor is \textit{b-$\bQ$-Cartier} if there is a birational contraction $Y\to X$ such that 
 $M_{Y}$ is $\bQ$-Cartier and $M_{Y'}$ is the pullback of $M_{Y}$ on $Y'$ for any birational contraction $Y'\to Y$.
In this case, we say that the b-divisor descends to $Y$, and it is represented by $M_Y$.

	\end{enumerate}
	
\subsection{(Generalised) pairs and singularities}
\begin{defn}[Generalised pairs]
    A \emph{generalised pair} $(X, B, M)$ consists of:
    \begin{itemize}
        \item a normal projective variety $X$,
        \item an effective $\bQ$-divisor $B \geq 0$ on $X$, and
        \item a b-$\bQ$-Cartier b-divisor $M$ over $X$, represented by a projective birational morphism $f \colon X' \to X$ and a $\bQ$-Cartier $\bQ$-divisor $M'$ on $X'$ such that:
        \begin{enumerate}
            \item $M'$ is nef, and
            \item $K_X + B + M$ is $\bQ$-Cartier, where $M := f_* M'$.
        \end{enumerate}
    \end{itemize}

We say that $(X, B + M)$ is a generalised pair with nef part $M'$.  
 
\end{defn}
Let $D$ be a prime divisor over $X$. Replace $X'$ with a log resolution of $(X,B)$ such that $D$ is a prime divisor on $X'$. We can write  
\[
K_{X'}+B'+M'=\pi^*(K_X+B+M).
\]  
Then the \textit{generalised log discrepancy} of $D$ is defined as  
\[
a(D,X,B,M)=1-\mult_D B'.
\]

We say that $(X,B+M)$ is \textit{generalised klt} (resp. \textit{generalised lc}, \textit{generalised $\epsilon$-lc}) if $a(D,X,B,M)>0$ (resp. $a(D,X,B,M)\geq0$, $a(D,X,B,M)\geq \epsilon$) for every prime divisor $D$ over $X$.

If $M=0$, then we say $(X,B)$ is a \textit{pair}, and we define its singularities similarly.

\begin{defn}[Lc threshold of $\bQ$-linear systems]
    Let $(X, B)$ be an lc pair.  
    The \textit{lc threshold} of a $\bQ$-Cartier $\bQ$-divisor $L \geq 0$ with respect to $(X, B)$ is defined as
    $$
    \mathrm{lct}(X,B,L) := \sup \{\, t \in \bR \mid (X, B + tL) \text{ is lc} \,\}.
    $$
    Now let $H$ be a $\bQ$-Cartier $\bQ$-divisor.  
    The $\bQ$-linear system of $H$ is
    $$
    |H|_{\bQ} := \{\, L \geq 0 \mid L \sim_{\bQ} H \,\}.
    $$
    We then define the \textit{lc threshold} of $|H|_{\bQ}$ with respect to $(X, B)$ (also called the \emph{global lc threshold} or \emph{$\alpha$-invariant}) as
    $$
    \mathrm{lct}(X, B, |H|_{\bQ}) := \inf \{\, \mathrm{lct}(X, B, L) \mid L \in |H|_{\bQ} \,\},
    $$
    which is equivalent to
    $$
    \sup \{\, t \in \bR \mid (X, B + tL) \text{ is lc for every } L \in |H|_{\bQ} \,\}.
    $$
\end{defn}
\begin{defn}[Good minimal models]
Let $\phi : X \dashrightarrow X^m$ be a projective birational contraction between normal projective varieties.  
Suppose that $(X, B)$ and $(X^m, B^m)$ are lc pairs, where $B^m = \phi_* B$.  
If 
\[
a(E, X, B) > a(E, X^m, B^m)
\] 
for all prime $\phi$-exceptional divisors $E \subset X$, $X^m$ is $\bQ$-factorial, and $K_{X^m} + B^m$ is nef,  
then we say that $\phi : X \dashrightarrow X^m$ is a \emph{minimal model} of $(X, B)$.  
If, further, $K_{X^m} + B^m$ is semiample, then the minimal model $\phi : X \dashrightarrow X^m$ is called a \emph{good minimal model}.
\end{defn}
\subsection{Canonical bundle formula}\label{sec:cbf}

    An \emph{lc-trivial fibration} $f \colon (X, B) \to Z$ consists of a projective surjective morphism $f \colon X \to Z$ with connected fibers between normal varieties such that
    \begin{itemize}
        \item $(X, B)$ is an lc pair, and
        \item there exists a $\bQ$-Cartier $\bQ$-divisor $L_Z$ on $Z$ such that
        \[
            K_X + B \sim_{\bQ} f^* L_Z .
        \]
    \end{itemize}

Let $f : (X, B) \to Z$ be an lc-trivial fibration with $\dim Z > 0$.  
Fix a prime divisor $D$ on $Z$, and let $t_D$ be the lc threshold of $f^*D$ with respect to $(X,B)$ over the generic point of $D$.  
Define
\[
B_Z := \sum (1-t_D) D,
\]
where the sum runs over all prime divisors on $Z$.  
Set
\[
M_Z := L_Z - (K_Z + B_Z),
\]
so that
\[
K_X + B \sim_{\bQ} f^*(K_Z + B_Z + M_Z).
\]
We call $B_Z$ the \emph{discriminant divisor} and $M_Z$ the \emph{moduli divisor} of adjunction.  
Note that $B_Z$ is uniquely determined, whereas $M_Z$ is determined only up to $\bQ$-linear equivalence.

Now let $\phi : X' \to X$ and $\psi : Z' \to Z$ be birational morphisms from normal projective varieties, and assume that the induced map $f' : X' \dashrightarrow Z'$ is a morphism.  
Let $K_{X'} + B'$ be the pullback of $K_X + B$ to $X'$.  
Similarly, we can define a discriminant divisor $B_{Z'}$ on $Z'$ and, setting $L_{Z'} := \psi^* L_Z$, obtain a moduli divisor $M_{Z'}$ such that
\[
K_{X'} + B' \sim_{\bQ} f'^*(K_{Z'} + B_{Z'} + M_{Z'}),
\]
with $B_Z = \psi_* B_{Z'}$ and $M_Z = \psi_* M_{Z'}$.  

In particular, the lc-trivial fibration $f : (X, B) \to Z$ induces b-$\bQ$-divisors $B$ and $M$ on $Z$, called the \emph{discriminant} and \emph{moduli} b-divisors, respectively.
\begin{thm}[\cite{ambroShokurovBoundaryProperty2004,fujinoModuliBdivisorsLctrivial2014}]
    With the above notation and assumptions.
    If $Z '\to  Z$ is a high resolution, then $M_{Z'}$ is nef and for any birational morphism $Z'' \to Z'$ from a normal projective variety, $M _{Z''}$ is the pullback of $M_{Z'}$. In particular, we can view $(Z,B_Z+M_Z)$ as a generalized pair with nef part $M_{Z'}$.
\end{thm}

	\subsection{Volume of divisors} We recall the definition of various types of volumes for divisors. In this paper, we mainly consider $\bQ$-divisors. However, for the proof of Proposition \ref{prop: bdd peff threshold}, we need to deal  with $\bR$-divisors.
	\begin{defn}[Volumes]
		Let $X$ be a normal  irreducible projective variety of dimension $d$, and let $D$ be an $\bR$-divisor on $X$. The \textit{volume} of $D$ is
		$$\vol(X,D)=\limsup_{m\to \infty}\frac{d!h^0(X,\cO_X(\lf mD\rf))}{m^d}.$$
		
	\end{defn}
\begin{defn}[Iitaka volumes]\label{def:Ivol}
Let $X$ be a normal irreducible projective variety of dimension $d$, and let $D$ be an $\mathbb{R}$-divisor on $X$.  
The \emph{Iitaka volume} of $D$, denoted by $\Ivol(D)$, is defined as
\[
\Ivol(D) :=
\begin{cases}
\displaystyle \limsup_{m\to\infty} \frac{\kappa(D)!\, h^0\!\left(X, \mathcal{O}_X\!\left(\lfloor mD \rfloor\right)\right)}{m^{\kappa(D)}} & \text{if } \kappa(D) \ge 0, \\[1em]
0 & \text{if } \kappa(D) = -\infty,
\end{cases}
\]
where $\kappa(D)$ denotes the Iitaka dimension of $D$.  
By convention, when $\kappa(D) = 0$ we set $\kappa(D)! = 1$, so in this case $\Ivol(D) = 1$.
\end{defn}

    If $f \colon X \to Z$ is a contraction and $D \sim_{\mathbb{Q}} f^{*} D_Z$ for some big $\mathbb{Q}$-divisor $D_Z$ on $Z$, then
$\Ivol(D) = \vol(D_Z)$.
	\begin{defn}[Restricted volumes]\label{def:restrict vlume}
		Let $X$ be a normal  irreducible projective variety of dimension $d$, and let $D$ be an $\bQ$-divisor on $X$. 
		Let $S\subset X$ be a normal irreducible subvariety of dimension $n$. 
		Suppose that $S$ is not contained in the augmented base locus $ \textbf{B}_+(D)$. 
		Then the \textit{restricted volume of $D$ along $S$} is
		$$\rvol(D)=\limsup_{m\to \infty}\frac{n!(\dim \mathrm{Im} (H^0(X,\cO_X(\lf mD\rf))\to H^0(S,\cO_S(\lf mD|_S\rf))))}{m^n}.$$
	\end{defn}
	
	For the precise definition of the \textit{augmented base locus} $ \textbf{B}_+(D)$, see \cite{einAsymptoticInvariantsBase2006}. In this paper, we only use the fact that $ \textbf{B}_+(D)$ is a Zariski-closed subset of $X$ such that $ \textbf{B}_+(D)\subsetneq X$ if and only if $D$ is big.
	The restricted volume $\rvol(D)$ measures asymptotically the number of sections of the restriction $\cO_S(\lf mD|_S\rf)$ that can be lifted to $X$. If $D$ is ample, then the restriction maps are eventually surjective, and hence
	$$\rvol(D)=\vol(D|_S).$$
	In general,  it can happen that $\rvol(D)<\vol(D|_S)$.
	\begin{thm}[{\cite[Corollary 4.27]{lazarsfeldConvexBodiesAssociated2009}}]\label{thm:LM09 cor 4.27}
		Let $X$ be an irreducible projective variety of dimension $d$,  and let $S \subset X$ be an irreducible (and reduced) Cartier divisor on $X$. Suppose that $D$ is a big $\bR$-divisor such that 	 $S\nsubseteq \textup{\textbf{B}}_+(D)$. Then the function $t\mapsto \vol(D+tS)$
		is diﬀerentiable at $t = 0$, and
		$$\frac{\mathrm{d}  }{\mathrm{d} t}(\vol(D+tS))\bigg|_{t=0}=d\rvol(D).$$
	\end{thm}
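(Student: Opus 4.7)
The plan is to follow the integral-formula approach of Lazarsfeld--Musta\c{t}\u{a}. The crucial intermediate result is the identity
$$\vol(D) = d\int_0^\infty \rvol(D - xS)\, dx,$$
valid for any big $\bR$-divisor $D$ with $S \not\subset \textup{\textbf{B}}_+(D)$. Granting this, applying it to $D + tS$ in place of $D$ and substituting $y = x - t$ in the integral yields
$$\vol(D+tS) = d\int_{-t}^\infty \rvol(D - yS)\, dy,$$
and the fundamental theorem of calculus immediately produces $\tfrac{\mathrm{d}}{\mathrm{d} t}\vol(D+tS)|_{t=0} = d\cdot \rvol(D)$. This last step is legitimate because $y\mapsto \rvol(D-yS)$ is continuous at $y=0$: the open condition $S\not\subset \textup{\textbf{B}}_+(D-yS)$ persists in a neighborhood of $y=0$, and restricted volume is continuous on the big locus where this holds.

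To establish the integral formula, I would exploit the short exact sequences
$$0 \to \cO_X\bigl(\lfloor mD\rfloor - kS\bigr) \to \cO_X\bigl(\lfloor mD\rfloor - (k-1)S\bigr) \to \cO_S\bigl((\lfloor mD\rfloor - (k-1)S)|_S\bigr) \to 0$$
for $k\ge 1$. The telescoping identity
$$h^0\bigl(X,\lfloor mD\rfloor\bigr) = \sum_{k\ge 1}\Bigl[h^0\bigl(X, \lfloor mD\rfloor - (k-1)S\bigr) - h^0\bigl(X,\lfloor mD\rfloor - kS\bigr)\Bigr]$$
bounds each summand by the dimension of the image of the restriction map to $S$. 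Interpreting the right-hand side as a Riemann sum in the variable $x = (k-1)/m$ and dividing by $m^d/d!$, the limit $m\to\infty$ converts the sum into $d\int_0^\infty \rvol(D - xS)\,\mathrm{d} x$, the factor $d$ arising from the different normalizations ($m^d$ for $\vol$ on $X$ versus $m^{d-1}$ for $\rvol$ along $S$).

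The main obstacle is showing that the inequality in the restriction sequence is asymptotically sharp, so that the Riemann sum converges to the claimed integral. Concretely, one must prove that whenever $x = k/m$ lies in the open interval on which $D - xS$ is big with $S\not\subset \textup{\textbf{B}}_+(D-xS)$, the restriction map $H^0(X,\lfloor mD\rfloor - (k-1)S) \to H^0(S,(\lfloor mD\rfloor - (k-1)S)|_S)$ has image of dimension $\tfrac{1}{(d-1)!}\rvol(D - xS)\,m^{d-1} + o(m^{d-1})$, \emph{uniformly} in $x$. This is precisely the content of the hypothesis that the $\textup{\textbf{B}}_+$ condition holds for $D - xS$: it ensures that the restricted graded linear series captures the leading-order cohomology. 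Combined with dominated convergence and the vanishing of $\rvol(D - xS)$ once $D - xS$ exits the big cone, this uniform estimate yields the integral identity and hence the theorem.
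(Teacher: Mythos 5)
First, note that the paper does not prove this statement at all: it is quoted verbatim as \cite[Corollary 4.27]{lazarsfeldConvexBodiesAssociated2009}, so the only honest comparison is with the argument in that source, which your sketch essentially tries to reconstruct (integral formula for the volume in terms of restricted volumes, then the fundamental theorem of calculus plus continuity of $\rvol$). The reduction of differentiability to the integral identity, including the substitution $y=x-t$ and the use of openness of the condition $S\nsubseteq\mathbf{B}_+(\cdot)$ together with continuity of restricted volume on that locus (an ELMNP-type theorem you are implicitly citing), is fine.

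The gap is in the justification of the integral formula itself, which is the entire content of the theorem. Your telescoping identity is correct (in fact each summand \emph{equals} the dimension of the image of the restriction map, since the kernel is exactly $H^0(X,\lfloor mD\rfloor-kS)$), but passing from the Riemann sum to $d\int \rvol(D-xS)\,\mathrm{d}x$ requires that $\frac{(d-1)!}{m^{d-1}}\dim\mathrm{Im}\bigl(H^0(X,\lfloor mD\rfloor-kS)\to H^0(S,\cdot)\bigr)$ converge to $\rvol(D-\tfrac{k}{m}S)$ uniformly on compact subsets of the relevant range of $x=k/m$. Your claim that ``this is precisely the content of the hypothesis that the $\mathbf{B}_+$ condition holds'' is not an argument: the hypothesis does not prove anything by itself; the needed input is either the Okounkov-body slice theorem of Lazarsfeld--Musta\c{t}\u{a} (their Theorem 4.24, which is how Corollary 4.27 is actually deduced) or the Fujita-approximation results of Ein--Lazarsfeld--Musta\c{t}\u{a}--Nakamaye--Popa showing that restricted volumes are genuine limits of these image dimensions and vary continuously. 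In addition, your global formula $\vol(D)=d\int_0^\infty\rvol(D-xS)\,\mathrm{d}x$ is problematic as stated: for intermediate $x$ the divisor $D-xS$ can still be big while $S\subseteq\mathbf{B}_+(D-xS)$, in which case $\rvol(D-xS)$ is not even defined (Definition \ref{def:restrict vlume}), and the image dimensions need not be governed by a restricted volume there. For the derivative at $t=0$ one only needs the local identity $\vol(D)-\vol(D-tS)=d\int_0^t\rvol(D-xS)\,\mathrm{d}x$ for small $t$, which is what the cited source proves; you should either restrict to that local statement or justify how the integrand is to be interpreted beyond the $\mathbf{B}_+$ range. As it stands, the crucial analytic step is asserted rather than proved.
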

	By \cite[Remark 4.29]{lazarsfeldConvexBodiesAssociated2009}, volume function has continuous partials in all directions at any point $D\in \mathrm{Big}(X)$, i.e., the function
	$\vol:\mathrm{Big}(X)\to \bR$
	is $\cC^1$.

\subsection{Bounded family of pairs}
\begin{defn}[Bounded families of couples and pairs]
A \emph{couple} consists of a projective normal variety $X$ and a reduced divisor $D$ on $X$.  
We call $(X, D)$ a couple rather than a pair because $K_X + D$ is not assumed to be $\bQ$-Cartier and $(X, D)$ is not assumed to have good singularities.  

Two couples $(X, D)$ and $(X', D')$ are said to be \emph{isomorphic} if there exists an isomorphism $X \to X'$ that maps $D$ onto $D'$.  

Let $\cP$ be a set of couples. We say that $\cP$ is \emph{bounded} if the following conditions hold:  
\begin{itemize}
    \item There exist finitely many projective morphisms $V^i \to T^i$ of varieties,  
    \item $C^i$ is a reduced divisor on $V^i$, and  
    \item for each $(X, D) \in \cP$, there exist an index $i$, a closed point $t \in T^i$, and an isomorphism $\phi: V^i_t \to X$ such that $(V^i_t, C^i_t)$ is a couple and $\phi_* C^i_t \ge D$.  
\end{itemize}

A set of projective pairs $(X, B)$ is said to be \emph{bounded} if the set of couples $(X, \Supp B)$ forms a bounded family.
\end{defn}

Boundedness for couples is equivalent to the following criterion.
\begin{lemma}[{\cite[Lemma 2.20]{birkarAntipluricanonicalSystemsFano2019}}] \label{lem:bir19 2.20}
    Let $d,r\in\bN$. Assume $\cP $ is a set of couples $(X, D)$ where $X$ is of dimension $d$ and there is a very ample divisor $H$ on $X$ with $H^d\leq r$ and $H^{d-1}\cdot D\leq r$. Then $\cP$ is bounded.
\end{lemma}

\begin{lemma}[{\cite[Lemma 4.6]{birkarModuliAlgebraicVarieties2022}}]\label{lem:bir22 4.6}
Let $d,r\in\bN$ and let $\Phi\subset \bQ^{\geq 0}$ be a DCC set. Then there 
exists $l\in \bN$ satisfying the following. Assume 
\begin{itemize}
\item $X$ is a normal projective variety of dimension $d$, 
\item $H$ is a very ample divisor, 
\item $B$ is a divisor with coefficients in $\Phi$, and
\item $H^d\le r$ and $B\cdot H^{d-1}\le r$.  
\end{itemize}
Then $lH-B$ is pseudo-effective. 
\end{lemma}

The following theorem is one of the main ingredients in the proof of Theorem \ref{Thm A}. 
We emphasise that it imposes no restriction on the coefficients of $B$ and $M$.
\begin{thm}[{\cite[Theorem 1.8]{birkarSingularitiesLinearSystems2021}}]
\label{thm:bir21 1.8}
        Let $d,r\in\bN$ and $\epsilon\in\bQ^{>0}$.
		Then there is a positive rational number $t$  depending only on $d,r,\epsilon$,
		satisfying the following. Assume
		\begin{itemize}
			\item $(X,B)$ is projective $\epsilon$-lc of dimension $d$,
			\item $H$ is a very ample divisor on $X$ with $H^ d \leq  r$,
			\item $H -B$ is pseudo-eﬀective, and
			\item $M \geq 0$ is an $\bQ$-Cartier $\bQ$-divisor with $H -M$ pseudo-eﬀective.
		\end{itemize}
		Then
		$$\lct(X, B, | M | _{\bQ} ) \geq \lct(X, B, | H |_{\bQ} ) \geq t.$$
	\end{thm}

We will use the following boundedness result for polarized nef pairs to deduce Theorem \ref{Thm C} from Theorem \ref{Thm A}.
\begin{thm} [{\cite[Theorem 1.5]{birkarGeometryPolarisedVarieties2023}}]
\label{thm:bir23 1.5}
		Let $d\in\bN$, $\delta,v\in\bQ^{>0}$. Consider pairs $(X,B)$ and nef and big Weil divisors $N$ on $X$ such that
		\begin{itemize}
			\item $(X,B)$ is projective $\epsilon$-lc of dimension $d$,
			\item the coefﬁcients of $B$ are in $\{ 0 \} \cup [ \delta,\infty)$,
			\item  $K_ X + B$ is nef,

			\item $\vol(K_ X + B + N) \leq v$.
		\end{itemize}
		Then the set of such $(X,B)$ forms a bounded family. If in addition $N \geq 0$, then the set of such $(X,B + N)$ forms a bounded family.
	\end{thm}



    \section{Boundedness}
	In this section, we  prove Theorem \ref{thm:boundedness}.
    
	\subsection{Boundedness of generalised pairs on bases of ﬁbrations}
In this subsection, we consider the set of good minimal models whose general fibers of the Iitaka fibration belong to a bounded family and whose Iitaka volume is fixed.

     \begin{defn}
       Let $d \in \bN$, $\Phi \subset \bQ^{\geq 0}$ be a DCC set, and $u,v\in \bQ^{>0}$. Let $\cG_{klt}(d,\Phi, u,v)$ be the set of   $(X,B)$ and $\bQ$-Cartier Weil divisors $A$ on $X$ satisfying the following conditions:
\begin{itemize}
    \item $(X,B)$ is a klt pair of dimension $d$, 
   
    \item the coefficients of $B$ are contained in $\Phi$,
     \item $K_X+B$ is semiample, defining a contraction $f:X \to Z$, 
    \item $A$ is a divisor on $X$ that is nef and big over $Z$,
    \item $\vol(A|_F)= u$, where $F$ is the general fiber of $f:X \to Z$, and
    \item $\Ivol(K_X+B) = v$.
\end{itemize}  
     \end{defn}

Since $K_X+B$ is semiample, there exists a contraction $f:X \to Z$ onto a normal variety $Z$.
By the canonical bundle formula in \S \ref{sec:cbf}, we can write
$$K_X+B \qlin f^*(K_Z+B_Z+M_Z),$$
and we may then regard $(Z, B_Z+M_Z)$ as a generalised pair with ample $K_Z+B_Z+M_Z$, that is, a \emph{generalised log canonical (lc) model}.

\begin{lemma}\label{lem: finiteness of discrepancy}
Let $d \in \bN$, $\Phi \subset \bQ^{\geq 0}$ be a DCC set, and $u, v \in \bQ^{>0}$. 
Then there exist $p, q \in \bN$ depending only on $(d, \Phi, u)$,  
and $l \in \bN$, $\epsilon \in \bQ^{>0}$ depending only on $(d, \Phi, u, v)$,   such that for any
$$(X, B), A \to Z \in \cG_{klt}(d, \Phi, u, v),$$
the following hold:
\begin{enumerate}
    \item  
    We have an adjunction formula
    \[
    K_X + B \sim_q f^*(K_Z + B_Z + M_Z),
    \]
    where $p M_{Z'}$ is Cartier on some high resolution $Z' \to Z$.

    \item  
    The pair $(X, B)$ is $\epsilon$-lc, and $lB$ is a Weil divisor.
\end{enumerate}
\end{lemma}

\begin{proof}
Replacing $X$ with the ample model of $A$ over $Z$, we may assume that $A$ is ample over $Z$.  
Applying \cite[Corollary~1.4]{birkarGeometryPolarisedVarieties2023} to $(F, B|_F)$ and $A|_F$, there exists $m \in \bN$, depending only on $d$ and $\Phi$, such that $H^0(F, \cO_X(mA|_F)) \neq 0$.  
Hence $mA \sim G$ for some Weil divisor $G$. Replacing $A$ with the horizontal part of $G$, we may assume that $A$ is effective.  

Applying \cite[Lemma~7.4]{birkarBoundednessVolumeGeneralised2021} yields integers $p, q$ satisfying (1).  
Moreover, by \cite[Lemma~8.2]{birkarBoundednessVolumeGeneralised2021},   the set of log discrepancies
\[
\{ a(D, X, B) \leq 1 \mid D \text{ a prime divisor over } X \}
\]
is finite, and hence (2) holds. Note that the proof of \cite[Lemma~8.2]{birkarBoundednessVolumeGeneralised2021} uses $A$ only in the relative sense over $Z$. 
\end{proof}

\begin{defn}[{\cite[Definition 1.1]{birkarBoundednessVolumeGeneralised2021}}]
		Let $d\in \bN$, $\Phi\subset \bQ^{\geq 0} $ be a DCC set, and $v\in \bQ ^{>0}$. 
		Let $\cF_{gklt}(d,\Phi,v)$ be the set of projective generalised pairs $(X, B +M)$ with nef part $M\Prime$ such that
		\begin{itemize}
			\item $(X, B + M)$ is generalised klt of dimension $d$,
			\item the coeﬃcients of $B$ are in $\Phi$,
			\item $M\Prime = \sum \mu_i M_i\Prime$ where $\mu_i\in\Phi$ and $M_i\Prime$ are nef Cartier, and
			\item $K_X+ B + M$ is ample with volume $\vol(K_X+ B + M) = v$.
		\end{itemize}
	\end{defn}
Now we can prove the boundedness of bases of Iitaka fibrations with their induced generalised pair structure under natural assumptions.
	\begin{thm}[ \cite{birkarBoundednessVolumeGeneralised2021}]\label{thm:bdd of base}
		Let $d\in \bN$, $\Phi\subset \bQ^{\geq 0} $ be a DCC set, and $u,v\in \bQ ^{>0}$. 
		Then there exists $l \in \bN$ depending only on $d,\Phi,u,v$ such that for any
		$$(X, B), A \to Z\in \cG_{klt}(d,\Phi,u,v),$$
		we can write an adjunction formula
		$$K_X+B\sim_l f^*(K_Z + B_Z + M_Z )$$
		such that the corresponding set of generalized pairs $(Z, B_Z + M_Z )$ forms a bounded family. Moreover, $l(K_Z + B_Z + M_Z )$ is very ample.
	\end{thm}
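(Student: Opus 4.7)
The plan is to apply the canonical bundle formula to the fibration $f:X\to Z$, identify the resulting data as a generalised klt pair with controlled invariants, and then invoke the boundedness theorem for generalised pairs of fixed volume from \cite{birkarBoundednessVolumeGeneralised2021}.

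First, since $K_X+B$ is semi-ample and defines the contraction $f:X\to Z$, its restriction to a general fibre $F$ satisfies $K_F+B_F:=(K_X+B)|_F\qlin 0$, so $(F,B_F)$ is a klt Calabi--Yau pair. The canonical bundle formula yields
\begin{equation*}
K_X+B\qlin f^*(K_Z+B_Z+M_Z),
\end{equation*}
where $B_Z$ is the discriminant part (determined by log canonical thresholds over codimension-one points of $Z$) and $M_Z$ is the moduli part, nef on a sufficiently high birational model of $Z$. By the global ACC of Hacon--McKernan--Xu, the coefficients of $B_Z$ lie in a DCC set $\Phi\Prime$ depending only on $d$ and $\Phi$, and $(Z,B_Z+M_Z)$ is generalised klt by standard preservation properties of the canonical bundle formula.

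Second, I would use the invariant $\vol(A|_F)=u$ to control the general fibre and extract effective adjunction. Since $A$ is ample over $Z$, the restriction $A|_F$ is an ample $\bQ$-Cartier Weil divisor on the klt Calabi--Yau pair $(F,B_F)$ with fixed volume $u$; after replacing $A|_F$ by an effective member of a suitable $\bQ$-linear system (provided by Kawamata--Viehweg vanishing together with Riemann--Roch on $F$), the boundedness theorem for polarised Calabi--Yau pairs \cite{birkarGeometryPolarisedVarieties2023} shows that $(F,B_F)$ moves in a bounded family. Consequently there is an integer $l_0$ depending only on $d,\Phi,u$ such that $l_0B_Z$ is integral and $l_0M_Z$ is the pushforward of a nef Cartier divisor, so the moduli part admits a decomposition $M\Prime=\sum\mu_iM_i\Prime$ with $\mu_i$ in a finite set and $M_i\Prime$ nef Cartier, matching the data entering $\cF_{gklt}$.

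Third, the Iitaka volume translates to the usual volume on the base: since $K_X+B\qlin f^*(K_Z+B_Z+M_Z)$ and the Iitaka dimension of $K_X+B$ equals $\dim Z$, one has $\vol(K_Z+B_Z+M_Z)=\Ivol(K_X+B)=v$, and this class is ample on $Z$ because $f$ is the contraction defined by $K_X+B$. Combined with the previous step, $(Z,B_Z+M_Z)\in\cF_{gklt}(\dim Z,\Phi\Prime\Prime,v)$ for a suitable DCC set $\Phi\Prime\Prime$ depending only on $d,\Phi,u$. The boundedness theorem of \cite{birkarBoundednessVolumeGeneralised2021} then bounds this family, and a standard effective very ampleness argument in bounded families yields the required $l\in\bN$ with $l(K_Z+B_Z+M_Z)$ very ample uniformly.

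The main obstacle is the effective adjunction step: extracting a uniform bound on the Cartier index of $M_Z$ and the denominators of $B_Z$ requires boundedness of the fibres, which is exactly where $u=\vol(A|_F)$ is indispensable. Without controlling $u$, the generalised pair data on $Z$ could vary through unbounded denominators and no general boundedness statement for $(Z,B_Z+M_Z)$ would be available; this reflects the connection with the effective adjunction conjecture discussed in the introduction.
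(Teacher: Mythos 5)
Your proposal follows essentially the same route as the paper: canonical bundle formula for $f:X\to Z$, effective adjunction obtained by controlling the polarised klt Calabi--Yau fibres via $u=\vol(A|_F)$ (the paper makes this precise by producing an effective $E\sim mA$ from \cite[Corollary 1.4]{birkarGeometryPolarisedVarieties2023} and then invoking \cite[Lemma 7.4]{birkarBoundednessVolumeGeneralised2021}), DCC coefficients for $B_Z$ from ACC for lc thresholds, the identification $\vol(K_Z+B_Z+M_Z)=\Ivol(K_X+B)=v$, and then boundedness from \cite[Theorem 1.4]{birkarBoundednessVolumeGeneralised2021} together with an effective very ampleness argument in a bounded family. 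One small correction: you assert that $l_0B_Z$ is integral for a uniform $l_0$, but this is neither needed nor established by the argument --- the coefficients of $B_Z$ only lie in a fixed DCC set (as you also correctly note via ACC), and DCC membership is exactly what the definition of $\cF_{gklt}$ requires.
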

	\begin{proof}
By Lemma \ref{lem: finiteness of discrepancy} (1),
		there exist $p,q \in \bN$ depending only on $d,\Phi,u$  such that 
		we can write an adjunction formula
		$$K_X+B\sim_q f^*(K_Z + B_Z + M_Z ),$$
		where $pM_{Z\Prime}$ is Cartier  on some higher resolution $Z\Prime\to Z$.
        
		By definition of the discriminant part of the canonical bundle formula and the ACC for lc thresholds \cite[Theorem 1.1]{haconACCLogCanonical2014}, we see that the coeﬃcients of $B_Z$ belong to a DCC subset of $\bQ^{>0}$ depending only on $d$ and $\Phi$, which we denote by $\Psi$.  Moreover, $(Z, B_Z+ M_Z)$ is generalised klt pair and
		$$\Ivol(K_X + B) = \vol(K_Z + B_Z + M_Z ) = v.$$
		Adding $\frac{1}{p}$, we can assume $\frac{1}{p}\in\Psi $, we see that 
		$$(Z,B_Z+M_Z)\in \cF_{gklt}(\dim Z,\Psi,v).$$
In the proof of \cite[Theorem~1.4]{birkarBoundednessVolumeGeneralised2021}, a divisor $\Theta$ is constructed such that 
\[
l(K_X+\Theta) \sim l(1+t)(K_X+B+M)
\]
is ample, $(X, \Theta)$ is $\epsilon$-lc, and the coefficients of $\Theta$ belong to a fixed DCC set $\Psi'$. Here $l \in \bN$, $t, \epsilon \in \bQ^{>0}$, and $\Psi' \subset \bQ^{>0}$ depend only on $(d, \Phi, u, v)$. Moreover, $(X, \Theta)$ is log birationally bounded.
By \cite[Theorem~1.6]{haconACCLogCanonical2014}, $(X, \Theta)$ belongs to a bounded family.  
Thus, we may replace $l$ so that both $l(K_X+\Theta)$ and $l(K_X+B+M)$ are very ample.  
Hence, the set of generalised pairs $(Z, B_Z + M_Z)$ forms a bounded family.
         Replacing $q,l$ with $ql$, we conclude the proof.
	\end{proof}

			

	\subsection{Boundedness  of nef threshold}

	In this subsection, we show that the nef threshold of $K_X+B$ with respect to $A$ is bounded for all 
$$(X,B), A \to Z \in \cG_{klt}(d, \Phi, \Gamma, \sigma).$$
We follow the argument of \cite[Theorem 4.1]{birkarModuliAlgebraicVarieties2022} with some modifications. The main difference is that, since $A$ may not be an effective divisor in our situation, we cannot directly apply the cone theorem to bound the nef threshold.

Therefore, we first assume that $K_X + B + \lambda A$ is big for some natural mumber $\alpha$ and rational number $\lambda \in [0,1]$. We can then replace $K_X + B + \lambda A$ by an effective $\bQ$-divisor $E$, but this loses control of the coefficients of $E$. For this reason, we require a stronger boundedness result on singularities in Theorem~\ref{thm:bir21 1.8} compared to \cite[Lemma 4.7]{birkarModuliAlgebraicVarieties2022}. To make the induction argument go through, we also need to show that $H - A$ is pseudo-effective, as in Theorem~\ref{Thm C}.
	
	\begin{prop}
	\label{thm:bdd of tra smm with bdd peff threshold}
		 Theorem $\ref{Thm B}_{d}$ and  Theorem $\ref{Thm C}_{d-1}$ imply Theorem $\ref{Thm A}_d$.
	\end{prop}
    
	\begin{proof}
		We proceed by  induction on the dimension of $X$.
		
	\textit{Step 1.} For each  
$$(X, B), A \to Z \in \cG_{klt}(d,\Phi,\Gamma,\sigma),$$  
we have  
$$\sigma(t) = (K_X+B+tA)^d = \sum_{i=0}^{d} \binom{d}{i} (K_X + B)^{d-i} \cdot A^i \, t^i,$$  
so the intersection numbers \((K_X + B)^{d-i} \cdot A^i\) are determined by \(d\) and \(\sigma\) for each \(0 \le i \le d\). In particular, for a general fiber \(F\) of \(X \to Z\),  
$$\Ivol(K_X + B) \cdot \vol(A|_F) = (K_X+B)^{\dim Z} \cdot A^{d-\dim Z}$$  
is a fixed number depending only on \(d\) and \(\sigma\). Since \(\vol(A|_F)\) belongs to the finite set \(\Gamma\), there are only finitely many possibilities for \(\Ivol(K_X + B)\). Therefore, we may fix both  
$$u := \vol(A|_F) \quad \text{and} \quad v := \Ivol(K_X + B).$$

		\textit{Step 2.} By Theorem \ref{Thm B} and Lemma \ref{lem: finiteness of discrepancy} (2),  we may choose  $\alpha\in\bN$ depending only on $d,\Phi,u,v,\lambda$ such that  $\alpha (K_X+B+\frac{\lambda}{2} A)$ is a big Weil divisor.  Moreover, 
			\begin{equation}\nonumber
\begin{aligned}
&\vol\Big(K_X + B + t \alpha (K_X + B + \tfrac{\lambda}{2} A)\Big)\\
=& \vol\Big((1 + t \alpha)(K_X + B) + \tfrac{t \alpha \lambda}{2} A\Big) \\
=& \Big((1 + t \alpha)(K_X + B) + \tfrac{t \alpha \lambda}{2} A\Big)^d
\end{aligned}
\end{equation}
		is a polynomial $\gamma$ in $t$ whose coeﬃcients are uniquely determined by the intersection numbers $(K_X + B)^{ d-i }\cdot A^ i $, $\alpha$ and  $\lambda$. Therefore, $\gamma$ is determined by $d, \Phi,\Gamma, \sigma, \lambda$.
		
		Replacing $A,u,\sigma$ with $\alpha (K_X+B+\frac{\lambda}{2} A), (\frac{\alpha\lambda}{2} )^{\dim F}u, \gamma$, we may assume that $A$ is a big Weil divisor.

		\textit{Step 3.} 
Since when $\dim X = 1$, $K_X + B + tA$ is always ample, and when $\dim Z = 0$, $K_X + B + tA$ is nef and big for all $0 < t < 1$, we may assume that $\dim X \ge 2$ and $\dim Z\geq 1$.

We claim that it suffices to find $\tau \in (0,1]$, depending only on $d, \Phi, \Gamma, \sigma$, such that $K_X + B + \tau A$ is nef.  
Indeed, once such a $\tau$ is found, $K_X + B + tA$ is nef and big for any $t \in (0, \tau)$.  
Since $A$ is nef and big over $Z$, by the base point free theorem it is semiample over $Z$, so we may pick $0 < t' \ll t$ such that $K_X + B + t'A$ is nef and big.  
Then $K_X + B + tA$ is a positive linear combination of $K_X + B + t'A$ and $K_X + B + \tau A$, and hence is nef and big.  

We aim to find such a $\tau$ in the subsequent steps.
		
			\textit{Step 4.}
		 By Theorem \ref{thm:bdd of base}, there exists $l \in \bN$ depending only on $d, \Phi, u, v$ such that we can write an adjunction formula
		$$K_X+B\sim_l f^*(K_Z + B_Z + M_Z )$$
		and  the generalised klt pair $(Z, B_Z + M_Z )$ belongs to a bounded family. Moreover,
		$$L := l(K_Z + B_Z + M_Z )$$
		is very ample.
		
Let $T$ be a general member of $|L|$, and let $S$ be its pullback to $X$.  
Define
\[
K_S + B_S := (K_X + B + S)|_S
\]
and set $A_S := A|_S$.  
Then
\[
(S, B_S),\ A_S \to T \in \cG_{klt}\bigl(d-1, \Phi, \Gamma, \psi\bigr)
\]
for some polynomial $\psi(t)$ depending only on $(d, \Phi, \Gamma, \sigma)$.

Indeed, we may choose a general $T \in |L|$ such that $A|_S$ is nef and big over $T$ and $(X, B+S)$ is plt.  
Hence $(S, B_S)$ is a projective klt pair, and $K_S + B_S$ is semi-ample, defining the contraction $g \colon S \to T$.  
If $G$ is a general fibre of $S \to T$, then
\[
\vol(A_S|_G) = \vol(A|_G) = u,
\]
since $G$ is among the general fibres of $X \to Z$. Moreover,
\begin{align*}
\psi(t) 
  &= (K_S + B_S + tA_S)^{d-1} \\
  &= \bigl((K_X + B + S + tA)|_S\bigr)^{d-1} \\
  &= (K_X + B + S + tA)^{d-1} \cdot S \\
  &= \bigl((l+1)(K_X + B) + tA\bigr)^{d-1} \cdot S \\
  &= \bigl((l+1)(K_X + B) + tA\bigr)^{d-1} \cdot l(K_X + B),
\end{align*}
which is a polynomial in $t$ whose coefficients are uniquely determined by the intersection numbers 
$(K_X + B)^{d-i} \cdot A^i$
and by $l$, and hence depend only on $d$, $\sigma$, and $l$.
		
	\textit{Step 5.}  
By Theorem~\ref{Thm C} in lower dimension, there exists a fixed $r \in \bN$ such that for any $(S, B_S), A_S$, we can find a very ample divisor $H_S$ on $S$ satisfying
\[
H_S^{d-1} \leq r, \quad (K_S + B_S) \cdot H_S^{d-2} \leq r, \quad \text{and} \quad H_S - A_S \ \text{is pseudo-effective}.
\]
By Lemma~\ref{lem:bir22 4.6}, we may further assume that $H_S - B_S$ is pseudo-effective.  

Since $A$ is big, there exists an effective $\bQ$-divisor $E$ such that $A \sim_{\bQ} E$.  
As $S$ is the pullback of a general element of a very ample linear system, we have $E_S := E|_S$ effective and $A_S \sim_{\bQ} E_S$.  
Moreover, 
\[
H_S - E_S \ \sim_{\bQ} \ H_S - A_S
\]
is also pseudo-effective.  

By the same argument as in Step~1, $v':=\Ivol(K_S + B_S)$ is fixed.  
Therefore, $(S, B_S)$ is $\epsilon$-lc for some $\epsilon \in \bQ^{> 0}$ depending only on $(d-1, \Phi, u, v')$ by Lemma~\ref{lem: finiteness of discrepancy}~(2).
		
		Thus by Theorem \ref{thm:bir21 1.8}, there is a ﬁxed $\tau  \in \bQ ^{>0}$ depending only on $d-1,\epsilon, r$ such that 
		$$\lct(S,B_S,|E_S|_{\bQ})>\tau,$$
		hence $(S, B_S + \tau E_S )$ is klt. Then by inversion of adjunction \cite[Theorem 5.50]{kollarBirationalGeometryAlgebraic1998}, 
		$(X, B + S + \tau E)$ is plt near $S$. Therefore,  $(X, B + \tau E)$ is lc over the complement of a ﬁnite set of closed points of $Z$: otherwise, the non-lc locus of $(X, B + \tau E)$ maps onto a  closed subset of $Z$ positive dimension which intersects $T$, hence $S$ intersects the non-lc locus of $(X, B + \tau E)$, a contradiction.
		
		\textit{Step 6.}   
		 In this step, we assume that $K_X+ B +\tau E$ is not nef.  Otherwise, $K_X+ B + \tau A\qlin K_X+ B + \tau E$ is nef, and we are done by Step 3.

		 Let $R$ be a $(K_X + B + \tau E)$--negative extremal ray, since $K_X + B + \tau E$ is nef and big over $Z$, $R$ is not contained in the fibers of   $X\to Z$. 
         By Step 5, the non-lc locus of $(X, B +\tau E)$ maps to finitely many points of $ Z$,
		so $R$ is not contained in  the image
		$$\mathrm{Im}(\overline{\NE}(\Pi) \to \overline{\NE}(X)),$$
		where $\Pi$ is the non-lc locus of $(X, B +\tau E)$. 
		
		Then by  the length of extremal ray \cite{ambroQuasilogVarieties2003} \cite[Theorem 1.1]{fujinoFundamentalTheoremsLog2011}, $R$ is generated by a curve $C$ with 
		$$(K_X + B + \tau E) \cdot C \geq -2d.$$
		Since $L\in|l(K_Z+B_Z+M_Z)|$ is very ample,  $f ^* L \cdot C =L\cdot f_*C\geq 1$,  we see that
		$$(K_X + B + 2df ^* L + \tau E) \cdot C \geq 0.$$	
		It follows  that
		$$K_X + B + 2df ^* L + \tau E$$	
		is nef. Since  $f ^* L \sim l(K_X + B)$, we see that
		$$ K_X +  B +\frac{\tau}{1+2dl} E \qlin  \frac{1}{1+2dl}\bigg(K_X +B+2df^*L+\tau E\bigg)$$
		is nef. 
		Hence after replacing $\tau $ with $\frac{\tau}{1+2dl}$, we can assume that $K_X + B + \tau E$ is nef.
	\end{proof}

    \subsection{Boundedness of pseudo-effective thereshold}
In this subsection, we show that the pseudo-effective threshold of $K_X+B$ with respect to $A$ is bounded for all 
\[
(X,B), \ A \to Z \ \in \ \cG_{klt}(d,\Phi,\Gamma,\sigma).
\]
 
	\begin{prop}\label{prop: bdd peff threshold}
		$\text{Theorem \ref{Thm A}}_{d-1}$  implies $\text{Theorem \ref{Thm B}}_{d}$.
	\end{prop}
	\begin{proof}
\textit{Step 0.}  
In this step, we introduce the top self-intersection function $\varsigma(t)$ and the volume function $\vartheta(t)$, and then outline the main idea of the proof using these functions.

Let 
\[
\varsigma(t) \in \mathbb{Q}[t], \quad 
\varsigma(t) := \big( A + t(K_X+B) \big)^d
= \sum_{i=0}^{d} \binom{d}{i} \, A^{\, d-i} \cdot (K_X + B)^{\, i} \, t^i,
\]
be the top self-intersection function.
It is easy to see that fixing $\varsigma$ is equivalent to fixing $\sigma$.  
Let 
\[
\vartheta(t) := \vol\big( A + t(K_X+B) \big)
\]
be the volume function. Then $\vartheta(t)$ is a non-negative, non-decreasing real function of $t$, and $\vartheta(t) = \varsigma(t)$ for $t \gg 0$.

It is enough to show that there exists a positive rational number $\tau$, depending only on $(d,\Phi,\Gamma,\sigma)$, such that 
\[
A + t(K_X+B) \ \text{is big for all } t > \tau.
\]
In other words, it suffices to show that $\vartheta(t) > 0$ for all $t > \tau$.

We will prove the proposition by showing:
\begin{itemize}
    \item There exists a positive rational number $\tau$, such that $\varsigma(t) > 0$ and strictly increasing for all $t \geq \tau$.
    \item Since $\varsigma(t) = \vartheta(t)$ for $t \gg 0$, a comparison of their derivatives shows that $\vartheta(t)$ decreases no faster than $\varsigma(t)$ as $t$ decreases. Hence, $\vartheta(t) \geq \varsigma(t) > 0$ for all $t \geq \tau$.
\end{itemize}
		
\textit{Step 1.} We prove this proposition by induction on the dimension of $Z$.  
Since $A^{\,d-i} \cdot (K_X+B)^{\,i} = 0$ for $i > \dim Z$, the dimension of $Z$ is determined by $\varsigma(t)$.  
Thus, we may assume $\dim Z = m$ is fixed.  
If $\dim Z = 0$, then clearly $A + t(K_X+B)$ is big.  
Hence, we may assume $\dim Z > 0$.  
        By Step~1 of the proof of Proposition~\ref{thm:bdd of tra smm with bdd peff threshold}, we may fix both
\[
u := \vol(A|_F) 
\quad\text{and}\quad 
v := \Ivol(K_X + B),
\]
where $F$ is a general fiber of $X \to Z$.

		If $\dim Z=1$, 
        then
		$$ \varsigma(t)=\big(A+t(K_X+B)\big)^d=A^d+dA^{d-1}\cdot(K_X+B)t=A^d+duvt.$$
		Let  $\varsigma^{\prime}(t)$ be the derivative of $\varsigma(t)$ with respect to $t$, it follows that
		$\varsigma^{\prime}(t)=duv$. 
		Since
		$$K_X+B\qlin f^*(K_Z+B_Z+M_Z)\qlin vF,$$   we have
		$$\vartheta(t)=\vol\big(A+t(K_X+B)\big)=v^d\vol(\frac{1}{v}A+tF).$$
		For each $t$ such that  $A+t(K_X+B)$ is big, i.e., $\vartheta(t)>0$, 
		we  may choose  a sufficiently general fiber $F_t$  of $X\to Z$ such that $F_t\nsubseteq \textbf{B}_+(\frac{1}{v}A+tF_t)$.
		Then by Theorem \ref{thm:LM09 cor 4.27},  the function $s\mapsto\vol(\frac{1}{v}A+tF_t+sF_t)$ is differentiable at $s=0$.  
	Let $\vartheta^{\prime}(t)$ denote the derivative of $\vartheta(t)$ with respect to $t$. This derivative is well-defined for all $t$ such that $\vartheta(t) > 0$. By Theorem \ref{thm:LM09 cor 4.27}, we have
		$$\frac{1}{v^d}\vartheta\Prime(t)=\frac{1}{v^d}\frac{\mathrm{d}  }{\mathrm{d} s}\vartheta(t+s)\bigg|_{s=0}=\frac{\mathrm{d}  }{\mathrm{d} s}\big(\vol(\frac{1}{v}A+tF_t+sF_t)\big)\bigg|_{s=0}=d\vol_{X|F_t}(\frac{1}{v}A+tF_t).$$
		It follows that for all $t$ such  that $\vartheta(t) > 0$,
		$$\vartheta^{\prime}(t)=dv^d\vol_{X|F_t}(\frac{1}{v}A+tF_t)\leq dv^d\vol\big((\frac{1}{v}A+tF_t)|_{F_t}\big)
		= dv^d\frac{1}{v^{d-1}}u=duv=\varsigma^{\prime}(t).$$
		
		\begin{figure}[ht]
			\caption{ The graph of $\varsigma(t)$ and $\vartheta(t)$ when $\dim Z=1$ }
			\label{figure:slope compare 1}
			\begin{center}
				\begin{tikzpicture}
					\begin{axis}[
						xlabel={$t$},
						ylabel={},
						grid=major,
						axis lines=middle,
						width=8cm,
						height=6cm,
						domain=-5:20,
						samples=100,
						legend pos=north west,
						xtick=\empty, 
						ytick=\empty, 
						]
						
						\addplot[domain=-5:20, samples=100, color=red, label={$\varsigma(t)$}]{0.5*(x-2)-4};
						
						\addplot[domain=-5:2, samples=2, color=black]{0};
						\addplot[domain=2:18, samples=100, color=blue, label={$\vartheta(t)$}]{0.0156*x^2-0.0625*x+0.0625};
						\addplot[domain=18:20, samples=100, color=black]{0.5*(x-2)-4};
						\addplot[color=black, mark=*,mark size=1] coordinates {(10, 0)};
						\addplot[color=black, mark=*,mark size=1] coordinates {(11, 0)};
						\addplot[color=black, mark=*,mark size=1] coordinates {(2, 0)};
						\addplot[color=black, mark=*,mark size=1] coordinates {(11, 1.3)};
                        \addplot[color=black, mark=*,mark size=1] coordinates {(18, 0)};
						\draw[dashed] (axis cs: 11, 1.2) -- (axis cs: 11, 0);
						\node[below] at (axis cs: 2, 0) {$\delta_X$};
						\node[below] at (axis cs: 9.9, 0) {$\alpha$};
						\node[below] at (axis cs: 11, 0) {$\tau$};
						\node[right,red] at (axis cs: 3, -3.5) {$\varsigma(t)$};
						\node[right,blue] at (axis cs: 4, 1) {$\vartheta(t)$};
                        \node[below] at (10, 5) {$\varsigma(t) = \vartheta(t)$ for $t \gg 0$};
                        \node[below] at (axis cs: 18, 0) {$\delta'_X$};
                        \draw[dashed] (axis cs: 18, 4) -- (axis cs: 18, 0);
                        \node[below] at (11, 3.2) {$\vartheta(\tau)$};
					
					\end{axis}
				\end{tikzpicture}
			\end{center}
		\end{figure}

		Let $\alpha$ be the root of $\varsigma(t)$ and set $\tau := \max\{\lceil \alpha \rceil + 1, 1\}$, so that $\tau$ is a positive rational number with $\varsigma(t) > 0$ for all $t \ge \tau$. Let $\delta_X$ be the largest real number such that $\vartheta(\delta_X) = 0$, where $\delta_X$ may \emph{a priori} depend on $X$. We claim that $\vartheta(\tau) > 0$. Suppose, for a contradiction, that $\vartheta(\tau) = 0$. Then $\tau \le \delta_X$, hence $\varsigma(\delta_X) \ge \varsigma(\tau) > 0$. Since $\vartheta(t) = \varsigma(t)$ for all $t \gg 0$, there exists $\delta'_X \gg 0$ (possibly depending on $X$) such that $\vartheta(\delta'_X) = \varsigma(\delta'_X)$. By \cite[Corollary~2.2.45]{lazarsfeldPositivityAlgebraicGeometry2004a}, the function $\vartheta(t)$ is continuous on $[\delta_X, \delta'_X]$, and since both $\varsigma(t)$ and $\vartheta(t)$ are differentiable on $(\delta_X, \delta'_X)$, Lemma~\ref{thm:rudin} yields some $\gamma_X \in (\delta_X, \delta'_X)$ such that $$\big(\vartheta(\delta'_X) - \vartheta(\delta_X)\big) \, \varsigma'(\gamma_X) = \big(\varsigma(\delta'_X) - \varsigma(\delta_X)\big) \, \vartheta'(\gamma_X).$$ Since $\vartheta(\delta'_X) = \varsigma(\delta'_X)$, $\vartheta(\delta_X) = 0$, and $\varsigma(\delta_X) > 0$, it follows that $\vartheta'(\gamma_X) > \varsigma'(\gamma_X)$, contradicting the inequality $\vartheta'(t) \le \varsigma'(t)$ for all $t > \delta_X$ from the previous paragraph. Therefore $\vartheta(\tau) > 0$, and hence $\vartheta(t) \ge \vartheta(\tau) > 0$ for all $t \ge \tau$.

		\textit{Step 2.} From now on we assume that $\dim Z = m > 1$. Recall that in Step~4 of the proof of Proposition~\ref{thm:bdd of tra smm with bdd peff threshold}, we pick a general element $T$ in the very ample linear system $|l(K_Z+B_Z+M_Z)|$ and let $S$ be its pullback to $X$, so that 
$$S \qlin l(K_X+B).$$
Define 
$$K_S + B_S := (K_X+B+S)|_S \quad \text{and} \quad A_S := A|_S,$$ 
so that 
$$K_S + B_S \qlin \Big(\frac{1}{l}+1\Big) S|_S.$$ 
Moreover, 
$$(S, B_S), A_S \to T \in \cG_{klt}(d-1, \Phi, \Gamma, \psi)$$ 
for some fixed polynomial $\psi(t) \in \bQ[t]$ depending only on $d, \Phi, \Gamma, \sigma$, with $\dim T = m-1$. By Theorem~\ref{Thm A} in lower dimension, there exists a positive rational number $\beta$, depending only on $d, \Phi, \Gamma, \sigma$, such that $A_S + t(K_S + B_S)$ is nef and big for all $t > \beta$.
		
		\textit{Step .} Recall that
$
\varsigma(t) = (A + t(K_X + B))^d$.
If $t > \beta(l+1)$, then $A_S + \frac{t}{l+1}(K_S + B_S)$ is nef and big by Step 2. We have
\begin{equation}\nonumber
\begin{aligned}
\varsigma^{\prime}(t) 
&= d \big(A + t(K_X + B)\big)^{d-1} \cdot (K_X + B) \\
&= \frac{d}{l} \big(A + t(K_X + B)\big)^{d-1} \cdot S \\
&= \frac{d}{l} \left(A_S + \frac{t}{l+1} (K_S + B_S)\right)^{d-1} \\
&> 0.
\end{aligned}
\end{equation}
Hence $\varsigma(t)$ is an increasing function on $\big(\beta(l+1), +\infty\big)$.

If $\varsigma(t)$ has no roots (which occurs only when $\dim Z$ is even), set $\tau = \beta(l+1) + 1$. If $\varsigma(t)$ has roots, let $\alpha$ be the largest root of $\varsigma(t)$ and set
$\tau = \max\{\beta(l+1), \lceil \alpha \rceil\} + 1$.
Note that $\tau$ is a positive rational number. Moreover, on $[\tau, +\infty)$, $\varsigma(t)$ is a positive, increasing real function, and $\vartheta(t)$ is a non-negative, non-decreasing real function.
		
		\begin{figure}[ht]
			\caption{ The graph of $\varsigma(t)$ and $\vartheta(t)$ when $\dim Z>1$ }
			\label{figure:slope compare 2}
			\begin{center}
			\begin{tikzpicture}
				\begin{axis}[
					axis lines=middle,
					xlabel={$t$},
					ylabel={},
					xmin=-1, xmax=14,
					ymin=-10, ymax=25,
					xtick=\empty,
					ytick=\empty,
					axis line style={-stealth},
					clip=false,
					width=12cm, height=8cm
					]
					
					%
					%
					
					%
					%
					%

					\addplot[domain=0:13.5, smooth, thick, red, samples=100] {(0.62*x-3)^2 - 3};

					\addplot[domain=-1:1, smooth, thick, black, samples=100] {0};
					\addplot[domain=1:12.7, smooth, thick, blue, samples=100] {1.2^(x+6) -0.5*(x+6)-0.1};
					%
					%
					\filldraw (1,0) circle (1pt) node[below] {};
					\filldraw (6,0) circle (1pt) node[below] {};
					\filldraw (7.66,0) circle (1pt) node[below] {};
					\filldraw (8.2,0) circle (1pt) node[below] {};
					\filldraw (8.2,6.1) circle (1pt) node[below] {};
					\filldraw (13,0) circle (1pt) node[below] {};
					\draw[dashed] (8.2,0) -- (8.2,6.1);
					\draw[dashed] (13,0) -- (13,22.5);
					
					\node[red] at (3, -4) {$\varsigma(t)$};
					\node[blue] at (4, 4) {$\vartheta(t) $};
					
					\node[below] at (8, 10) {$\vartheta(\tau)$};
					
					\node[below] at (8, 17) {$\varsigma(t) = \vartheta(t)$ for $t \gg 0$};
					\node[below] at (13, -0.1) {$\delta'_X$};
					\node[below] at (5.5, 0.4) {$\beta(\ell+1)$};
					\node[below] at (8.2, -0.1) {$\tau$};
					\node[below] at (7.6, -0.1) {$\alpha$};
					\node[below] at (1, -0.1) {$\delta_X$};
					
				\end{axis}
			\end{tikzpicture}
			\end{center}
		\end{figure}
		
		\textit{Step 4.}	In this step, we conclude the proof.
		We see that
		$$\vartheta(t)=\vol(A+t(K_X+B))=\frac{1}{l^d}\vol(lA+tS),$$
		for any $S\qlin l(K_X+B)$.
		For each $t$ such that  $A+t(K_X+B)$ is big, i.e., $\vartheta(t)>0$, 
		we  may choose  $S_t$ as the pullback of  a sufficiently general element  $T_t\in|l(K_Z+B_Z+M_Z)|$ such that $S_t\nsubseteq \textbf{B}_+(lA+tS_t)$.
		Then by Theorem \ref{thm:LM09 cor 4.27}, the function $s\mapsto\vol(lA+tS_t+sS_t)$ is differentiable at $s=0$.
		Let  $\vartheta^{\prime}(t)$ be the derivative of $\vartheta(t)$ with respect to $t$.  This derivative is well-defined for all $t$ such that $\vartheta(t) > 0$. By Theorem \ref{thm:LM09 cor 4.27}, we have
		$$l^d\vartheta\Prime(t)=
        l^d\frac{\mathrm{d}  }{\mathrm{d} s}\vartheta(t+s)\bigg|_{s=0}=
        \frac{\mathrm{d}  }{\mathrm{d} s}(\vol\big(lA+tS_t+sS_t)\big)\bigg|_{s=0}=d\vol_{X|{S_t}}(lA+tS_t).$$
		It follows that for all $t\geq \tau$ such that $\vartheta(t)>0$,  we have
		\begin{equation}\nonumber
			\begin{aligned}
				\vartheta^{\prime}(t)=&\frac{d}{l^d}\vol_{X|{S_t}}(lA+tS_t)  \\
				\leq&  \frac{d}{l}\vol\big((A+\frac{t}{l}S_t)|_{S_t}\big)\\
				= &  \frac{d}{l}\vol\big(A_{S_t}+\frac{t}{l+1}(K_{S_t}+B_{S_t})\big)\\
				=& \frac{d}{l}\big(A_{S_t}+\frac{t}{l+1}(K_{S_t}+B_{S_t})\big)^{d-1}\\
				=&\varsigma^{\prime}(t),
			\end{aligned}
		\end{equation}
		where the second-to-last equality follows from the fact that $A_{S_t}+\frac{t}{l+1}(K_{S_t}+B_{S_t})$ is nef on $[\tau, +\infty)$.
		
By the same argument as in the last paragraph of Step 1, we conclude that
$\vartheta(t) \geq \vartheta(\tau) > 0$ for all $t > \tau$.
	\end{proof}
We  use the following elementary result in the proof of Proposition~\ref{prop: bdd peff threshold}. 
Note that differentiability at the endpoints is not required.  

\begin{lemma}[{\cite[Theorem~5.9]{rudinPrinciplesMathematicalAnalysis1976}}]\label{thm:rudin}
Let $f$ and $g$ be continuous real-valued functions on $[a,b]$ that are differentiable on $(a,b)$. 
Then there exists a point $x \in (a,b)$ such that  
\[
\big(f(b)-f(a)\big) g'(x) = \big(g(b)-g(a)\big) f'(x).
\]
\end{lemma}
\begin{rem}
	In the case $\dim X=2$, by the Zariski decomposition for normal surfaces 
	\cite[Corollary 7.5]{sakaiWeilDivisorsNormal1984},
	the volume of a big divisor is greater than or equal to its self-intersection. 
	Thus, when $\dim X=2$, Proposition \ref{prop: bdd peff threshold} follows immediately from this fact. 
	However, this property does not necessarily hold in higher dimensions.
	For example, let $Y$ be a smooth 3-fold with $K_Y$ ample, and let 
	$\pi: X = \mathrm{Bl}_P Y \to Y$ be the blow-up of $Y$ at a closed point $P$. 
	Then $K_X = \pi^* K_Y + 2E$, where $E$ is the exceptional divisor over $P \in Y$, and $K_X$ is big. 
	It follows that $\vol(K_X) = \vol(K_Y) = (K_Y)^3$, while $(K_X)^3 = (K_Y)^3 + 8 E^3 = (K_Y)^3 + 8 > \vol(K_X)$.
\end{rem}
	\subsection{Boundedness of klt good minimal models}

	In this subsection, we prove the boundedness of klt good minimal models.
	
	\begin{prop}\label{thm:bdd of tra strongly smm}
    
		$\text{Theorem \ref{Thm A}}_{d}$ implies $\text{Theorem \ref{Thm C}}_{d}$.
	\end{prop}
	\begin{proof}
For each 
$$(X, B), A\to Z \in \cG_{klt}(d,\Phi, \Gamma,  \sigma),$$
by Step~1 of the proof of Proposition~\ref{thm:bdd of tra smm with bdd peff threshold}, we may fix both
\[
u := \vol(A|_F) 
\quad\text{and}\quad 
v := \Ivol(K_X + B),
\]
where $F$ is a general fiber of $X \to Z$.
By Lemma \ref{lem: finiteness of discrepancy} (2), $(X, B)$ is $\epsilon$-lc and $lB$ is a Weil divisor for some $\epsilon > 0$ and $l \in \bN$ depending only on $(d, \Phi, u, v)$. 
Replacing $l$ by a bounded multiple, Theorem \ref{Thm A} implies that
\[
L := l\Big(K_X+B+\frac{\tau}{2}A\Big)
\]
is a nef and big $\bQ$-Cartier Weil divisor. Let 
\[
L' := l(K_X+B) + L,
\] 
which is also a nef and big Weil divisor. Then $L' - K_X=(l-1)(K_X+B)+B+L$ is pseudo-effective. By \cite[Theorem 1.1]{birkarGeometryPolarisedVarieties2023}, there exists $m \in \bN$, depending only on $d$ and $\epsilon$, such that the linear system $|mL'|$ defines a birational map. Picking a general member $N \in |mL'|$, we have that $N \geq 0$ is a nef and big Weil divisor.
It then follows that
\[
\vol(K_X+B+N) = (2ml+1)^d \vol\Big(K_X+B+\frac{l\tau}{2(2ml+1)}A\Big) = (2ml+1)^d \, \sigma\Big(\frac{l\tau}{2(2ml+1)}\Big),
\]
which is fixed. Consequently, by Theorem \ref{thm:bir23 1.5}, the set of $(X, B+N)$ forms a bounded family.

Therefore, there exist a fixed $r \in \bN$ and a very ample divisor $H$ on $X$ such that
\[
H^d \leq r \quad \text{and} \quad H^{d-1} \cdot (K_X + B + N) \leq r.
\]
By Lemma \ref{lem:bir22 4.6}, $H - N$ is pseudo-effective. Since
\[
N - \frac{\tau}{2} A = (ml+1)(K_X + B) + (ml-1)\Big(K_X + B + \frac{\tau}{2} A\Big)
\]
is also pseudo-effective, it follows that $\frac{2}{\tau}H - A$ is pseudo-effective. Replacing $H$ by a bounded multiple, we may assume that $H-A$ is pseudo-effective.
	\end{proof}


	\begin{proof}[\textbf{Proof of Theorem }\ref{thm:boundedness}]
		This directly follows from Theorem \ref{Thm C}.
	\end{proof}

    \appendix
	\section{Moduli space}\label{sec:moduli}

In this appendix, we apply the boundedness results obtained in this paper to construct the moduli space of klt good minimal models of arbitrary Kodaira dimension, polarized by line bundles that are relatively ample over the bases of their respective Iitaka fibrations.
	
    We refer readers to \cite{alperStacksModuli2023} for the notions of  stacks, algebraic stacks,  Deligne-Mumford stacks and algebraic spaces.

	Let $d\in \bN$, $\Phi=\{a_1,a_2,\ldots,a_m\}$, where $a_i\in\bQ^{\geq 0}$, $\Gamma\subset \bQ^{\geq 0} $ be a finite set, and $\sigma\in \bQ[t]$ be a polynomial. In this appendix, we will fix these data.
	\subsection{Moduli functor of traditional stable minimal models}
Let $\mathbbm{k}$ be an algebraically  closed field of characteristic zero.
We define the main object studied in this appendix, as introduced in Birkar's survey note \cite[\S 10]{birkarBoundednessModuliAlgebraic2024}.
\begin{defn}[Traditional stable minimal models]\label{def:tsmm}

		A\emph{ traditional stable minimal model} $(X, B), A $  over $\mathbbm{k}$ consists of a projective connected pair $ (X, B) $ and a Cartier  divisor $ A$ (not necessarily effective) such that 
		\begin{itemize}
			\item $(X,B)$ is klt,
			\item $K_X+ B$ is semi-ample defining a contraction $ f : X \to Z$, and
			\item $K_X+ B + tA$ is ample for some $t > 0$.
		\end{itemize}
        
	A \textit{$(d,\Phi,\Gamma,\sigma)$-traditional stable minimal model} is a  traditional stable minimal model $(X, B), A$ such that
			\begin{itemize}
            \item $\dim X = d$, 
				\item the coeﬃcients of $B$ are in $\Phi$,
				\item $\vol(A |_F ) \in \Gamma$, where $F$ is any general ﬁber of $f : X \to Z$, and
				\item $(K_X + B + tA)^d = \sigma(t)$.
			\end{itemize}	
\end{defn}
	 We  recall the notion of relative Mumford divisor from \cite[Deﬁnition 4.68]{kollarFamiliesVarietiesGeneral2023}.
	\begin{defn}[Relative Mumford divisor]\label{def:relative mumford}
		Let $f:X\to S$ be a flat finite type morphism with $S_2$ fibers of pure dimension $d$. 
		A subscheme $D\subset X$ is a \emph{relative Mumford divisor}   if there is an open set $U\subset X$ such that

		\begin{itemize}
			\item ${\rm codim}_{X_s}(X_s\setminus U_s) \geq 2$ for each $s\in S$, 
			\item $D\vert_U$ is a relative Cartier divisor,
			\item $D$ is the closure of $D\vert_U$, and
			\item $X_s$ is smooth at the generic points of $D_s$ for every $s\in S$.
		\end{itemize}
		
		By $D|_U$ being relative Cartier we mean that $D|_U$ is a Cartier divisor on $U$ and that 
		its support does not contain any irreducible component of any fiber $U_s$.  
		
		If $D\subset X$  is a relative Mumford divisor for $f:X\to S$ and $T\to S$ is a morphism, then the \textit{divisorial pullback}  $D_T$ on $X_T := X \times_ S T$ is the relative Mumford divisor deﬁned to be the closure of the pullback of $D | _U $ to $U_T$.  In particular, for each $s \in S$, we deﬁne $D_s = D | _{X _s}$ to be the closure of $D |_ {U _s}$  which is the divisorial pullback of $D$ to $X_s$.
		
	\end{defn}

	\begin{defn}[Locally stable family]\label{def:locally stable}
		A \emph{locally stable family of klt  pairs} $(X,B) \to S$ over a reduced Noetherian scheme $S$
		is a flat finite type morphism $X\to S$ with $S_2$ fibers  and a $\bQ$-divisor $B$ on $X$   satisfying
		\begin{itemize}
			\item  each prime component of $B$ is a relative Mumford divisor,  
			\item $K_{X/S} +B$ is $\bQ$-Cartier, and
			\item  $(X_s,B_s)$ is a klt pair for any point $s\in S$.
		\end{itemize}
		
	\end{defn}
		%
		%

	We  define families of traditional minimal models and the corresponding moduli functor. 
	
	\begin{defn}
		Let $S$ be a reduced scheme over $\mathbbm{k}$.
		\begin{enumerate}
			\item  When $S = \Spec \mathbb{K}$ for a ﬁeld $\mathbb{K}$, we deﬁne a traditional stable minimal model over $\mathbb{K}$ as in Definition \ref{def:tsmm}   by replacing $\mathbbm{k}$ with $\mathbb{K}$ and replacing connected with geometrically connected. Similarly we can deﬁne $(d,\Phi,\Gamma,\sigma)$-traditional stable minimal models over $\mathbb{K}$.
			\item 	For general $S$, a \emph{family of traditional stable minimal models} over $S$ consists of a projective morphism $X\to S $ of schemes, a $\bQ$-divisor $B$ and  a line bundle $A$ on $X$ such that
			\begin{itemize}
				\item $(X,B)\to S$ is  a locally stable family,
				\item $(X_s,B_s),A_s$ is a traditional stable minimal model over $k(s)$ for every $s\in S$.
			\end{itemize}
			Here $X_s$  is the ﬁber of $ X \to S$ over $s$ and $B_s$ is the divisorial pullback  of $B$ to $X_s$. Moreover, $K_{X_s}+B_s$ is semi-ample which defines a contration $X_s\to Z_s$, and $A_s$ is a  line bundle on $X_s$  which is  ample over $Z_s$. We will denote this family by $(X, B), A \to S$.
			
			\item  Let $d\in \bN$, $\Phi=\{a_1,a_2,\ldots,a_m\}$, where $a_i\in\bQ^{\geq 0}$, $\Gamma\subset \bQ^{> 0} $ be a finite set, $\sigma\in \bQ[t]$ be a polynomial.
			A \emph{family of $(d, \Phi,\Gamma, \sigma)$-marked traditional stable minimal models} over $S$ is a family of traditional stable minimal models $(X, B), A \to S$ such that
			\begin{itemize}
				\item $B = \sum\limits_{i=1}^{m} a_iD_i$, where $D_i\geq 0$ are relative Mumford divisors, and
				\item $(X_s,B_s),A_s$ is a $(d, \Phi,\Gamma, \sigma)$-traditional stable minimal model over $k(s)$ for every $s\in S$, where $B_s = \sum\limits_{i=1}^{m} a_iD_{i,s}$.
			\end{itemize}
			
			\item We define the moduli functor 	$\mathfrak{TS}_{klt}(d,\Phi,\Gamma,\sigma)$ of $(d, \Phi,\Gamma, \sigma)$-traditional stable minimal models
			from the category of reduced  $\mathbbm{k}$-schemes to the category of groupoids 
			by choosing:
			\begin{itemize}
				\item On objects: 	for a reduced $\mathbbm{k}$-scheme $S$, one take			
				\begin{equation}\nonumber
					\begin{aligned}
					&	\mathfrak{TS}_{klt}(d,\Phi,\Gamma,\sigma)(S)\\=
					&	\{\text{family of } (d, \Phi,\Gamma, \sigma)\text{-traditional stable minimal models over } S\}. 					\end{aligned}
				\end{equation}
				We deﬁne an isomorphism $ (f\Prime:(X\Prime,B\Prime),A\Prime\to S) \to (f:(X,B),A\to S)$ of any two objects in $	\mathfrak{TS}_{klt}(d,\Phi,\Gamma,\sigma)(S)$ to be an isomorphism $\alpha_X:(X\Prime,B\Prime)\to (X,B)$  over $S$ such that  $A\Prime\sim_S \alpha_X^* A$. 
				\item On morphisms: $(f_T:(X_T, B_T),A_T \to T ) \to (f:(X, B),A \to S)$ consists of morphisms of reduced $\mathbbm{k}$-schemes $\alpha:T\to S$  such that the natural map $g:X _T \to X \times_S T $ is an isomorphism, $B_T$ is the divisorial pullback of $B$ and  $A_T\sim_T g^*\alpha_X^*A$. Here $\alpha_X:X \times_S T\to X$ is the base change of $\alpha$. 
				
			\end{itemize}
			
		\end{enumerate}
		
	\end{defn}
	
	Now we can state our main result on moduli.
	\begin{thm}\label{thm:moduli}
		$	\mathfrak{TS}_{klt}(d,\Phi,\Gamma,\sigma)$  is a separated Deligne-Mumford stack of finite type, which admits a coarse moduli space $TS_{klt}(d,\Phi,\Gamma,\sigma)$ as  a separated algebraic space.
	\end{thm}
	

	\subsection{Moduli stack of traditional stable minimal models}


	\begin{lemma}\label{lem:Hilbert poly}
		Let $\mathbb{K}$ be a field of characteristic zero.
		Then there exist  natural number $\tau$ and $I$ depending only on $(d,\Phi,\Gamma,\sigma)$ such that $\tau\Phi\subset\bN$ and they satisfy the following. For any $(X, B), A\in\mathfrak{TS}_{klt}(d, \Phi , \Gamma, \sigma)(K)$ and nef Cartier divisor $M$ on $X$, we have
		\begin{itemize}
			\item $\tau(K_X+ B)$ is a base point free divisor, $A+\tau(K_X+ B)$ is an ample Cartier divisor,
			\item Let $L_M:=I(A+\tau(K_X+ B))+M$, then $L_M$ is strongly ample, i.e. $L_M$ is very ample and
			$H ^q (X,kL_M) = 0$ for any $k, q > 0$,
		\end{itemize}
		
	\end{lemma}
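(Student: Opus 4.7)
The plan is to combine Theorem \ref{thm:boundedness} with standard effective results (uniform Cartier index on bounded families, effective base point freeness, Kawamata--Viehweg vanishing, and Fujino's very ampleness lemma) to extract the two integers $\tau$ and $I$ uniformly over the family.

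First I would set up the uniform integrality data. Since $\Phi=\{a_1,\ldots,a_m\}$ is finite, fix $\tau_0\in\bN$ with $\tau_0\Phi\subset\bN$. By Theorem \ref{thm:boundedness}, $\cT\cS_{klt}(d,\Phi,\Gamma,\sigma)$ is a bounded family, and by Lemma \ref{lem: finiteness of discrepancy} every pair $(X,B)$ occurring is $\epsilon$-lc for a single $\epsilon>0$ depending only on $d,\Phi,\Gamma,\sigma$. Applying \cite[Theorem 1.10]{hanACCLocalVolumes2023} to this bounded $\epsilon$-lc class yields $I_0\in\bN$ such that $I_0D$ is Cartier for every $\bQ$-Cartier Weil divisor $D$ on any such $X$; in particular $I_0A$ and $I_0(K_X+B)$ are Cartier.

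Next I would construct $\tau$. Since $K_X+B$ is semi-ample and $I_0(K_X+B)$ is nef Cartier, the effective base point free theorem \cite[Theorem 1.1]{kollarEffectiveBasePoint1993} furnishes a uniform multiple $\tau_1$ of $\tau_0I_0$ such that $\tau_1(K_X+B)$ is base point free. Because $A$ is $f$-ample (where $f:X\to Z$ is the contraction defined by $K_X+B$), and the intersection numbers $(K_X+B)^{d-i}\cdot A^i$ are pinned down by $d,\sigma$ (Lemma \ref{lem: Bir22 lem 4.12}), an argument paralleling the one used in the proof of Theorem \ref{thm:bdd of tra strongly smm} produces a uniform $\tau\in\bN$ divisible by $\tau_1$ such that $L:=A+\tau(K_X+B)$ is ample Cartier and $L-(K_X+B)=A+(\tau-1)(K_X+B)$ remains ample.

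Finally I would construct $I$ and verify strong ampleness of $L_M:=IL+M$. For any $k\geq 1$ and any nef Cartier $M$, the decomposition
\[
kL_M-(K_X+B)=(kI-1)L+\bigl(L-(K_X+B)\bigr)+kM
\]
exhibits this divisor as a sum of ample, ample and nef divisors, hence ample; Kawamata--Viehweg vanishing for the klt pair $(X,B)$ then yields $H^q(X,kL_M)=0$ for all $k,q>0$, handling the cohomological clause. For very ampleness, I apply Fujino's very ampleness lemma \cite[Lemma 7.1]{fujinoEffectiveBasepointfreeTheorem2017} uniformly on the bounded $\epsilon$-lc family: it delivers a single $I\in\bN$ depending only on $d$ and $\epsilon$ such that $L_M$ is very ample whenever $L_M-(K_X+B)$ is nef and big, which the computation above supplies. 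The genuine technical obstacle is the uniformity of $I$ with respect to the nef Cartier perturbation $M$: since $M$ is arbitrary, the intersection numbers of $L_M$ are a priori unbounded, so a Matsusaka-type argument cannot be invoked directly. The argument therefore hinges on Fujino's lemma taking $I$ as a function only of $d$ and the uniform $\epsilon$-lc constant (and not of the specific Cartier divisor being tested), combined with the uniform Cartier index from \cite{hanACCLocalVolumes2023}; both of these crucially rely on the boundedness statement of Theorem \ref{thm:boundedness}.
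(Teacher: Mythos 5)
Your construction of $\tau$ and the Kawamata--Viehweg vanishing argument for $H^q(X,kL_M)=0$ are sound and mirror the paper's approach; the decomposition $kL_M-(K_X+B)=(kI-1)L+(L-(K_X+B))+kM$ correctly yields ampleness. You also correctly identify that the arbitrary nef Cartier perturbation $M$ is the crux of the difficulty. However, the very ampleness step has a genuine gap. You attribute to Fujino's Lemma 7.1 the statement that there is a uniform $I$ such that \emph{$L_M$ itself} is very ample whenever $L_M-(K_X+B)$ is nef and big. That is not what the lemma says. As it is used in the proof of Theorem \ref{thm:bdd of tra strongly smm}, Fujino's lemma takes a Cartier divisor $L$ with $I_1L$ base point free and $I_1L-(K_X+B)$ nef and big, and produces an integer $I$ depending on $d,I_1$ such that the \emph{multiple} $IL$ is very ample. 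Applied to the divisor $L_M$ directly, it would at best produce a uniform multiple $mL_M$ that is very ample, not $L_M$ itself; and since $M$ is arbitrary, $L_M$ is not a fixed multiple of any divisor in the bounded family, so there is no way to pre-absorb the factor $m$. Moreover, a divisor $N$ with $N-(K_X+B)$ nef and big need not be very ample, nor even globally generated, so the hypothesis you check does not by itself supply what you claim.

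The paper's proof bypasses Fujino's lemma entirely at this step and instead uses Castelnuovo--Mumford regularity. One first fixes $L_0:=I_0(A+\tau(K_X+B))$ very ample with $L_0-(K_X+B)$ nef and big (achieved by replacing $I_0$ with a bounded multiple), then sets $I=(d+2)I_0$ and $\cF:=L_M-L_0=(d+1)L_0+M$. The Kawamata--Viehweg estimate you set up, applied to $\cF-iL_0$ for $1\le i\le d$, shows $\cF$ is $0$-regular with respect to $L_0$; by Mumford's theorem $\cF$ is then globally generated, and $L_M=L_0+\cF$ is very ample as a sum of a very ample and a globally generated line bundle (Hartshorne, Exercise II.7.5(d)). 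The choice $I=(d+2)I_0$ is calibrated precisely so that $\cF-iL_0-(K_X+B)$ remains nef and big for all $i\le d$. Replacing your Fujino's-lemma invocation with this regularity argument closes the gap; the rest of your plan is essentially the paper's route.
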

	\begin{proof}
		 By the same argument as \cite[Proof of Lemma 10.2]{birkarModuliAlgebraicVarieties2022}, it is enough to find $\tau$ and $I$ when $\mathbb{K}= \bC$.
Note that $A$ is a line bundle in our setting. Hence, by the proof of Theorem \ref{thm:boundedness}, there exists $\tau \in \bN$ such that $\tau(K_X+B)$ is base point free, and both $A+(\tau-1)(K_X+B)$ and $A+\tau(K_X+B)$ are ample Cartier divisors. Applying the effective base point free theorem \cite[Theorem 1.1]{kollarEffectiveBasePoint1993} and the very ampleness lemma \cite[Lemma 7.1]{fujinoEffectiveBasepointfreeTheorem2017} to $A+\tau(K_X+B)$, we obtain $I_0 \in \bN$ such that $L_0 := I_0(A+\tau(K_X+B))$ is very ample.
		
		After replacing $I_0$ with a bounded multiple, we may assume that $L_0-(K_X+B)$ is nef and big.
		Let $I=(d+2)I_0$  and $\cF:=L_M- I_0(A+\tau(K_X+ B))$,
		then $$H^i(X,\cF\otimes L_0^{\otimes(-i)})=0$$ for all $i>0$ by Kawamata-Viehweg vanishing theorem.
		Thus $\cF$ is 0-regular with respect to $L_0$ (\cite[Definition 1.8.4]{lazarsfeldPositivityAlgebraicGeometry2004a}), and 
		hence $\cF$ is base point free by \cite[Theorem 1.8.5]{lazarsfeldPositivityAlgebraicGeometry2004a}. Therefore, 
		$$L_M=L_0+\cF$$ is very ample by \cite[Exercise \uppercase\expandafter{\romannumeral2} 7.5(d)]{hartshorneAlgebraicGeometry1977}. 
		Again we have $L_M-(K_X+B)$ is nef and big, hence 
		$H ^q (X,kL_M) = 0$ for any $k, q > 0$.  
	\end{proof}
	
		\begin{nota}\label{nota:}
		
		From now on, 	we will fix the positive natural numbers   $I$   and $\tau$ obtained in  Lemma \ref{lem:Hilbert poly}. Let $S$ be a reduced scheme,  for any $(f:(X, B), A\to S)\in\mathfrak{TS}_{klt}(d, \Phi , \Gamma, \sigma)(S)$, 
		we define 
		$$L_{1,S}:=I(A+\tau (K_{X/S}+ B))+I(A+\tau (K_{X/S}+ B))=2IA+2I\tau (K_{X/S}+ B),$$ 
			\begin{equation}\nonumber
			\begin{aligned}
				L_{2,S}:=&I(A+\tau (K_{X/S}+ B))+(I-1)(A+\tau (K_{X/S}+ B))+\tau (K_{X/S}+ B) \\
				= & (2I-1)A+2I\tau (K_{X/S}+ B)
			\end{aligned}
			\end{equation}
	and $L_{3,S}:=L_{1,S}+L_{2,S}$
		to be the divisorial sheaves on $X$.
		Then $L_{1,S}-L_{2,S}=A$, and $L_{j,S}$  are  strongly ample line bundles over $S$ for  $ j=1,2,3$ by Lemma \ref{lem:Hilbert poly} and the proof of Lemma \ref{lem:locally constant of hilbert poly}.
	\end{nota}

	\begin{lemma}\label{lem:locally constant of hilbert poly}
	
		Let $(X,B=\sum\limits_{i=1}^m a_iD_i),A\to S$ be a family of $(d, \Phi,\Gamma, \sigma$)-marked traditional stable minimal models over reduced Noetherian scheme $S$. For $ j=1,2,3$, let $L_{j,S}$ be the divisorial sheaves on $X$ as Notation \ref{nota:}. Then for every $k\in\bZ_{>0}$, the  functions
		$S\to \bZ$ by sending 
		\begin{enumerate} 
			\item $ s\mapsto h^0(X_s,kL_{j,s})$ for $ j=1,2,3$ and
			\item $ s\mapsto \mathrm{deg}_{L_{3,s}}(D_{i,s})$ for $i=1,2,\ldots,m$
		\end{enumerate} 
		are locally constant on $S$, where $L_{j,s}=L_{j,S}|_{X_s}$ and $D_{i,s}=D_i|_{X_s}$ are the divisorial pullbacks  to $X_s$, and $\mathrm{deg}_{L_{3,s}}(D_{i,s}):=D_{i,s}\cdot L_{3,s}^{d-1}$. 
	\end{lemma}
	\begin{proof}
		(1).
		For $j=1,2,3$, 
		it is enough to show that $L_{j,S}$ are flat over $S$: since then $\chi(X_s,kL_{j,s})$ are locally constant, and $L_{j,S}$ are strongly ample over $S$ by Lemma \ref{lem:Hilbert poly}, hence $h^0(X_s,kL_{j,s})$ are locally constant. Since $X\to S$ is flat, it  suffices to show that  $\cO_X(L_{j,S})$ are line bundles  by \cite[Proposition \uppercase\expandafter{\romannumeral3} 9.2(c)(e)]{hartshorneAlgebraicGeometry1977}. 
		
		Since $(X,B)\to S$ is a locally stable family, $B$ is a relative Mumford divisor over $S$, we see that $\tau(K_{X/S}+B)$ is $\bQ$-Cartier, and it is  mostly ﬂat (\cite[Deﬁnition 3.26]{kollarFamiliesVarietiesGeneral2023}) over $S$. Moreover, 
		  since $\cO_{X_s}(\tau(K_{X_s}+ B_s ))$ is a base point free line bundle  for any $s\in S$ by Lemma \ref{lem:Hilbert poly}, 
		 $\cO_X(\tau(K_{X/S}+ B) )$ is a mostly ﬂat family  of line bundles. Therefore, by \cite[Corollary 4.34 and Proposition 5.29]{kollarFamiliesVarietiesGeneral2023}, $\cO_X(\tau(K_{X/S}+ B) )$ is a  line bundle on $X$. 
		Furthermore, since $A$ is a line bundle on $X$,  $\cO_X(L_{j,S})$ are line bundles for $j=1,2,3$. 
		
		(2). It follows from \cite[Theorem 4.3.5]{kollarFamiliesVarietiesGeneral2023}.
	\end{proof}

Let  $n,l\in\bZ_{>0}$, $\textbf{c}=(c_1,c_2,\ldots,c_m)\in\bN^m$, and  $h\in \bQ[k]$ be a polynomial.   Let $S$ be a reduced scheme, for any $(f:(X, B=\sum\limits_{i=1}^m a_iD_i), A\to S)\in\mathfrak{TS}_{klt}(d, \Phi , \Gamma, \sigma)(S)$ and $j=1,2,3$, let $L_{j,S}$ be the strongly ample line bundles over $S$ as Notation \ref{nota:}. 
We define   $\mathfrak{TS}_{h,n,l,\textbf{c}}$  to be a full subcategory of $\mathfrak{TS}_{klt}(d,\Phi,\Gamma,\sigma)$ such that   $\mathfrak{TS}_{h,n,l,\textbf{c}}(S)$ is a groupoid whose objects consist of families of $(d, \Phi,\Gamma, \sigma)$-traditional stable minimal models over $S$ satisfying:
\begin{itemize}
	\item the  Hilbert polynomial of $X_s$ with respect to $L_{3,s}$ is $h$,
	\item $h^0(X_s,L_{1,s})-1=n$,
	\item $h^0(X_s,L_{2,s})-1=l$,  and
	\item $(\mathrm{deg}_{L_{3,s}}(D_{1,s}),\mathrm{deg}_{L_{3,s}}(D_{2,s}),\ldots,\mathrm{deg}_{L_{3,s}}(D_{m,s}))=\textbf{c}$
\end{itemize} 
for every $s\in S$.
\begin{lemma}\label{lem:subfunctor}

	We can write 
	$$\mathfrak{TS}_{klt}(d,\Phi,\Gamma,\sigma)=\bigsqcup_{h,n,l,\textbf{c}}\mathfrak{TS}_{h,n,l,\textbf{c}}$$
	as disjoint union, and  each $\mathfrak{TS}_{h,n,l,\textbf{c}}$ is a union of connected components of $\mathfrak{TS}_{klt}(d,\Phi,\Gamma,\sigma)$.
	Moreover, there are only finitely many $n,l\in \bZ_{>0}$, $\textbf{c}=(c_1,c_2,\ldots,c_m)\in\bN^m$ and $h \in\bQ [k]$ such that $\mathfrak{TS}_{h,n,l,\textbf{c}}$ is not  empty.
\end{lemma}

\begin{proof}
	Given any $(f:(X, B=\sum\limits_{i=1}^m a_iD_i), A\to S)\in\mathfrak{TS}_{klt}(d, \Phi , \Gamma, \sigma)(S)$.
	By  Lemma \ref{lem:locally constant of hilbert poly}, the Hilbert functions $$h_s(k)=\chi(X_s,kL_{3,s})=h^0(X_s,kL_{3,s})$$ of $X_s$ with respect to $L_{3,s}$, 
	and the numbers
	$$n_s=h^0(X_s,L_{1,s})-1,\ \ \ l_s=h^0(X_s,L_{2,s})-1\ \ \text{ and }\ \  c_{i,s}=\mathrm{deg}_{L_{3,s}}(D_{i,s})$$
	 are locally constant on $s\in S$ for all $1\leq i\leq m$. The first assertion follows from this fact.
	
	The second assertion follows from the fact that
	$n_s$, $l_s$, $c_{i,s} $ and $h_s$
	belong to a finite set for all $1\leq i\leq m$ by Theorem \ref{thm:boundedness} (these finiteness results can be reduced to the case when $s=\Spec \bC$ by the same argument as \cite[Proof of Lemma 10.2]{birkarModuliAlgebraicVarieties2022}). 
\end{proof}

	\begin{lemma}\label{lem: is stack}
		$\mathfrak{TS}_{h,n,l,\textbf{c}}$  is a stack.
	\end{lemma}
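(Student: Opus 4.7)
The plan is to verify the two defining properties of a stack from Definition \ref{def:stack}: that morphisms glue and that objects glue, both for the \'etale topology. An isomorphism in $\mathfrak{TS}_{klt}(d,\Phi,\Gamma,\sigma)(S)$ between $((X,B),A)$ and $((X\Prime,B\Prime),A\Prime)$ splits into three pieces of data: an $S$-isomorphism of schemes $\alpha_X:X\to X\Prime$, the strict equality of divisors $\alpha_X^*B\Prime=B$, and the linear equivalence $\alpha_X^*A\Prime\sim_S A$. To check condition (1), I will treat each piece separately. The $\mathrm{Isom}$ presheaf for schemes is an \'etale sheaf by classical faithfully flat descent. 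The condition $\alpha_X^*B\Prime=B$ on relative Mumford divisors can be tested on the big open locus where they are relative Cartier, and is \'etale-local there. The condition $\alpha_X^*A\Prime\sim_S A$ says that a specific line bundle on $X_T$ is of the form $f_T^*N$; this is \'etale-local on $T$ because the family has geometrically connected fibers (so $f_{T*}\cO_{X_T}=\cO_T$ is compatible with base change on a suitable open), meaning the Picard stack of $X/S$ relative to pullbacks from $S$ is a stack.

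For condition (2), given an \'etale cover $\{S_i\to S\}$ and descent data consisting of families $((X_i,B_i),A_i)$ with isomorphisms $\alpha_{ij}$ satisfying the cocycle condition, I will glue the three ingredients in order. First, the underlying isomorphisms of schemes $\alpha_{X,ij}$ satisfy a \emph{strict} cocycle (the equivalence $\sim_{S_{ij}}$ only relaxes the line bundle part), so by faithfully flat descent the $X_i$ glue to an algebraic space $X\to S$; projectivity, hence schemeness, will follow from the presence of a relatively ample line bundle produced below. Second, the prime components $D_{j,i}$ of $B_i$ are relative Mumford divisors which satisfy $\alpha_{X,ij}^*D_{j,i}=D_{j,j}$, hence descend to relative Mumford divisors $D_j$ on $X$, producing $B=\sum_{j=1}^m a_jD_j$. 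Third, for the line bundles, I choose actual isomorphisms $\beta_{ij}:\alpha_{X,ij}^*A_i\cong A_j\otimes p_{X,j}^*M_{ij}$ for certain line bundles $M_{ij}$ on $S_{ij}$; the strict scheme cocycle for $\alpha_{X,ij}$ combined with geometrically connected fibers forces the $\{M_{ij}\}$ to form a \v{C}ech $1$-cocycle on $S$. Since $\mathrm{Pic}$ is a stack, $\{M_{ij}\}$ descends to a line bundle $M$ on $S$, and after twisting the $A_i$ by pullbacks of chosen trivializations the residual cocycle becomes strict and yields a line bundle $A$ on $X$, well defined up to $\sim_S$.

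It remains to verify that $((X,B),A)\to S$ so constructed is a family of $(d,\Phi,\Gamma,\sigma)$-marked traditional stable minimal models; but every condition (locally stable, fibers belong to $\cT\cS_{klt}(d,\Phi,\Gamma,\sigma)$) is fiberwise, hence automatic from the corresponding conditions on each $(X_i,B_i),A_i\to S_i$. Projectivity of $X\to S$ follows by gluing the strongly ample line bundles $L_M$ provided by Lemma \ref{lem:Hilbert poly}: since the construction of $L_M$ from $A$, $K_{X/S}+B$, and the fixed numerical data is canonical, the bundles $L_{M,i}$ are compatible with the $\alpha_{ij}$ and glue along with $A$ to a relatively ample line bundle on $X$.

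The main obstacle is the line bundle descent in step three of condition (2): the cocycle for the $A_i$ only holds up to line bundles on the base, so one has to handle gerbe-like data and not merely a strict cocycle. The essential input is that for a flat projective morphism with geometrically connected fibers $f:X\to S$, the relative Picard stack is well behaved, specifically $f_*\cO_X^{\times}=\cO_S^{\times}$ after base change, which rigidifies the ambiguity enough for the auxiliary bundles $M_{ij}$ to form an honest $1$-cocycle on $S$ and descend through the stack $\mathrm{Pic}$.
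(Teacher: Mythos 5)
Your proposal takes a genuinely different route from the paper. For condition (1) the paper shows $\mathfrak{Isom}_S(f,f')$ is representable by a scheme (embedding $X$ into $\bP(f_*\cO_X(L_1))\times_S\bP(f_*\cO_X(L_2))$ and passing through a Hilbert/Isom-scheme argument plus separatedness of the Picard scheme), whence it is trivially a sheaf. You instead break the Isom presheaf into the scheme isomorphism, the divisor-matching condition, and the linear equivalence $\alpha_X^*A'\sim_S A$, and treat each by direct descent. That part is fine: the third condition is indeed \'etale-local on $T$ because for flat proper families with geometrically connected reduced fibers, cohomology and base change show that $\cL:=\alpha_X^*A'_T\otimes A_T^{-1}$ having trivial restriction to each fiber implies $\cL\cong f_T^*f_{T*}\cL$ with $f_{T*}\cL$ a line bundle, so the property descends along any \'etale cover of $T$.

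The gap is in your step three of condition (2). You correctly note that the cocycle condition on the $\alpha_{ij}$ only controls $A$ up to pullbacks from the base, so that on $X_{ij}$ one has $A_i\otimes A_j^{-1}\cong f^*M_{ij}$ for uniquely determined (up to isomorphism) line bundles $M_{ij}$ on $S_{ij}$, and that $M_{ij}\otimes M_{jk}\cong M_{ik}$ on $S_{ijk}$. This makes $\{M_{ij}\}$ a \v{C}ech $1$-cocycle for the presheaf $\Pic$, as you say. But the next sentence, ``since $\Pic$ is a stack, $\{M_{ij}\}$ descends to a line bundle $M$ on $S$,'' is not a meaningful application of descent: descent data for $\Pic$ consists of objects over the $S_i$ and compatible isomorphisms over the $S_{ij}$, whereas $\{M_{ij}\}$ is a family of objects over the $S_{ij}$. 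What is actually needed is that $\{M_{ij}\}$ be a \v{C}ech \emph{coboundary}, $M_{ij}\cong N_i\otimes N_j^{-1}$ with $N_i\in\Pic(S_i)$, so that replacing $A_i$ by $A_i\otimes f_i^*N_i^{-1}$ makes the descent data strict, and even then one must choose the resulting isomorphisms to satisfy the strict cocycle condition (a torsor under $\Gamma(S_{ijk},\cO^\times)$). The combined obstruction lives in $H^2_{\text{\'et}}(S,\mathbb{G}_m)$, and the input you invoke, $f_*\cO_X^\times=\cO_S^\times$, yields the uniqueness of $M_{ij}$ and the cocycle identity but says nothing about this $H^2$ class. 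So the key assertion ``rigidifies the ambiguity enough \ldots to descend'' is precisely the claim that needs a proof, and the proposal does not supply one. The paper sidesteps this direct computation: for gluing $(X',B')$ it plugs the strongly relatively ample line bundles $L_1',L_2'$ into Alper's Proposition 2.4.13 argument in place of $\omega^{\otimes 3}$, and for $A'$ it cites Alper's Propositions B.1.4(2) and B.1.11 applied to the \'etale cover $X'\to X$. If you want to keep your more elementary route, you have to show directly that the obstruction class in $H^2(S,\mathbb{G}_m)$ vanishes for the descent data arising here, or reduce to a statement where strict descent data is available, rather than asserting that $f_*\cO_X^\times=\cO_S^\times$ suffices.
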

	\begin{proof}

		Since our argument follows the same strategy as in \cite[Proposition 2.5.14 and Example 2.5.9]{alperStacksModuli2023}, we only sketch the proof here.
		
				Axiom (1) of \cite[Deﬁnition 2.5.1]{alperStacksModuli2023}
		follows from  descent
		\cite[Proposition 2.1.7, Proposition 2.1.19, Proposition 2.1.4(1) and Proposition 2.1.16(2)]{alperStacksModuli2023}. 
		
		To verify Axiom (2) of \cite[Deﬁnition 2.5.1]{alperStacksModuli2023}, 
		i.e., given any descent datum  $(f\Prime,\xi)$
		 with respect to a covering $S\Prime\to S$ (see \cite[Remark 2.10]{hashizumeBoundednessModuliSpaces2025} for notions of covering and descent datum), where $(f\Prime:(X\Prime,B\Prime),A\Prime\to S\Prime )\in \mathfrak{TS}_{h,n,l,\textbf{c}}(S\Prime)$, we need to show
		 that $f\Prime$ descends to a family $(f:(X,B),A \to S )\in \mathfrak{TS}_{h,n,l,\textbf{c}}(S)$.  
		  We use the strongly $f\Prime$-ample line bundles  $\cO_{X\Prime}(L_{1,S\Prime}\Prime)$ and $\cO_{X\Prime}(L_{2,S\Prime}\Prime)$ as Notation \ref{nota:} instead of $\omega_{\cC\Prime/S\Prime}^{\otimes 3}$ in \cite[Proposition 2.5.14]{alperStacksModuli2023},  then the same argument as in \textit{loc.cit.} implies that $(X\Prime,B\Prime)\to S\Prime$  descends to $(X,B)\to S$. 
		Moreover, by applying 
		\cite[Proposition 2.1.4(2) and Proposition 2.1.16(2)]{alperStacksModuli2023} to the  covering $X\Prime\to X$,  we see that $A\Prime$  descends to a line bundle $A$ on $X$. Since every geometric ﬁber of $f:(X,B),A\to S$ is identiﬁed with a geometric ﬁber of $f\Prime:(X\Prime,B\Prime),A\Prime\to S\Prime$,  
		$(f:(X,B),A \to S )\in \mathfrak{TS}_{h,n,l,\textbf{c}}(S)$.
	\end{proof}

For any scheme $S$ and positive integer $n,l$, 
		Let $\bP^n_S\times_S\bP^l_S\cong \bP^n\times\bP^l\times S$ be the natural isomorphism, and
		$$\bP^n\stackrel{p_1}{\longleftarrow }\bP^n\times\bP^l\times S\stackrel{p_2}{\longrightarrow}  \bP^l$$
		be the projections. 
		Then for any $a,b\in\bZ$, we denote $p_1^*\cO_{\bP^n}(a)\otimes p_2^*\cO_{\bP^l}(b)$
		by	$\cO_{\bP^ n\times\bP^l\times S}(a,b)$. 
	
	\begin{thm}\label{thm: Artin stack}
		$\mathfrak{TS}_{h,n,l,\textbf{c}}$  is an algebraic stack of finite type.
	\end{thm}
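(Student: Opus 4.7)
The plan is to present $\mathfrak{TS}_{klt}(d,\Phi,\Gamma,\sigma)$ as a global quotient stack $[H/G]$, where $H$ is a locally closed subscheme of (a finite disjoint union of) bi-graded Hilbert schemes and $G$ is a product of projective linear groups. This follows the classical Hilbert-scheme approach to moduli of polarized varieties, in the two-embedding variant of \cite{viehwegQuasiprojectiveModuliPolarized1995} adapted to handle a non-effective polarization $A$. The key point is that although $A$ need not be ample and so cannot embed $X$ directly, the difference $L_1-L_2$ of the two strongly ample divisors produced by Lemma \ref{lem:Hilbert poly} recovers $A$, so a choice of sections of $L_1$ and $L_2$ embeds $X$ into a product $\bP^n\times\bP^l$ from which $A$ can be read off as $p_1^*\cO(1)-p_2^*\cO(1)$.

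First, by Theorem \ref{thm:boundedness} together with Lemmas \ref{lem:Hilbert poly} and \ref{lem:locally constant of hilbert poly}, for any family in $\mathfrak{TS}_{klt}(d,\Phi,\Gamma,\sigma)(S)$ the Hilbert polynomials $P_j(k)=h^0(X_s,kL_{j,s})$ of $L_1,L_2$ and the $L_3$-degrees of the components $D_i$ are locally constant and take only finitely many values. Fix one admissible tuple of polynomials and set $n+1=P_1(1)$, $l+1=P_2(1)$. Let $\cH=\Hilb^P(\bP^n\times\bP^l)$ be the Hilbert scheme for the corresponding bi-graded polynomial $P(a,b)$; this is projective of finite type, and carries a universal family $\cX\to\cH$. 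To parametrize the boundary, introduce for each $i=1,\ldots,m$ the parameter space of relative Mumford divisors on $\cX/\cH$ with the prescribed degree, in the sense of \cite[Chapter 7]{kollarFamiliesVarietiesGeneral2023}; these are again finitely many, of finite type. Let $\cH_0$ denote the resulting fiber product parametrizing tuples $(\cX,\cD_1,\ldots,\cD_m)$.

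Now cut out a locally closed subscheme $H\subset\cH_0$ by imposing all the defining conditions of a $(d,\Phi,\Gamma,\sigma)$-marked family of traditional stable minimal models: the fibers are geometrically integral and normal; the pair $(X_s,B_s=\sum a_iD_{i,s})$ is klt and $(X,B)\to S$ is a locally stable family in the sense of Section \ref{subsection:Family of pairs} (both locally closed by the results of \cite{kollarFamiliesVarietiesGeneral2023}); the restrictions of $p_1^*\cO(1)$ and $p_2^*\cO(1)$ to $\cX$ are, up to pullback from $H$, the line bundles $L_1,L_2$ produced by Lemma \ref{lem:Hilbert poly} relative to a line bundle $A$ on $\cX$; the divisor $K_{X/S}+B$ is relatively semi-ample, defining a contraction $f:X\to Z$ over which $A$ is relatively ample; the fiberwise intersection numbers agree with those forced by $\sigma$ and $\Gamma$ via Lemma \ref{lem: Bir22 lem 4.12}. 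Each of these conditions is locally closed; the numerical conditions are even locally constant, semi-ampleness of $K_{X/S}+B$ reduces to relative base-point-freeness of $\tau(K_{X/S}+B)$, which is Cartier with fiberwise base-point-free fibers by Lemma \ref{lem:Hilbert poly}, and klt-ness is locally closed by \cite{kollarFamiliesVarietiesGeneral2023}.

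Finally, the group $G=\mathrm{PGL}_{n+1}\times\mathrm{PGL}_{l+1}$ acts on $\bP^n\times\bP^l$ and hence on $H$; by Lemma \ref{lem: is stack} together with the universal property of the Hilbert scheme, any isomorphism of objects of $\mathfrak{TS}_{klt}(d,\Phi,\Gamma,\sigma)$ over a base $S$ comes, after passing to an \'etale cover that trivializes the ambiguity $A'\sim_S A$, from a $G$-action on the embeddings. Consequently the morphism $[H/G]\to\mathfrak{TS}_{klt}(d,\Phi,\Gamma,\sigma)$ induced by the universal family is an equivalence, and taking the finite disjoint union over admissible Hilbert polynomials exhibits $\mathfrak{TS}_{klt}(d,\Phi,\Gamma,\sigma)$ as an algebraic stack of finite type. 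The principal technical obstacle is the locally closed nature of the remaining conditions, in particular the matching between $L_1-L_2$ and a relative line bundle $A$ on $\cX$ (which must be controlled via the separatedness of the relative Picard functor) and the relative semi-ampleness of $K_{X/S}+B$ defining the contraction $f$; both are handled by standard base-change and cohomology-and-base-change arguments, using that the relevant divisors are Cartier with base-point-free fibers by Lemma \ref{lem:Hilbert poly}.
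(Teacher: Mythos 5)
Your proposal is correct and follows essentially the same route as the paper: embed via the two strongly ample divisors $L_1,L_2$ into $\bP^n\times\bP^l$ so that $A$ is recovered as $p_1^*\cO(1)-p_2^*\cO(1)$, parametrize the boundary with Koll\'ar's $\mathrm{MDiv}$ spaces, cut out the locally closed locus imposing local stability, the Cartier/matching condition tying the embedding to $A$ and $\omega_{X/S}^{[2I\tau]}(2I\tau B)$, semi-ampleness of $K+B$, the klt/ampleness openness, and the numerical constraints from $\Gamma$ and $\sigma$, and finally realize $\mathfrak{TS}_{klt}(d,\Phi,\Gamma,\sigma)$ as the finite disjoint union of quotient stacks $[H/\mathrm{PGL}_{n+1}\times\mathrm{PGL}_{l+1}]$. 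The only presentational difference is that the paper makes the locally closed decomposition explicit in eleven labelled steps ($H\supset H_1\supset\cdots\supset H_{10}$, using in particular Viehweg's Lemma 1.19 in Step 5 for the Picard-functor matching and Birkar's Lemmas 8.5 and 8.7 in Steps 6--7 for semi-ampleness and the volume condition), whereas you summarize these cut-outs more briefly.
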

	\begin{proof}

		\textit{Step 1.} 
		In this step, we  consider a suitable Hilbert scheme parametrizing  the total spaces  of interest.
		
		 For any $(f:(X,B),A\to S )\in\mathfrak{TS}_{h,n,l,\textbf{c}}(S)$ and for $j=1,2,3$, let $L_{j,S}$ be the strongly ample line bundles over $S$ as Notation \ref{nota:}. We get an embedding
		$$X\hookrightarrow\bP(f_*\cO_X(L_{1,S}))\times_S \bP(f_*\cO_X(L_{2,S})).$$
		We proceed to parametrize such embedding.
		
		Let $H=\Hilb_h (\bP^ n \times\bP^l)$ be the Hilbert scheme parametrizing closed subschemes of $\bP^n\times\bP^l $ with Hilbert polynomial $h$. Let
		$X_H=\Univ_h (\bP^ n\times\bP^l )\stackrel{i}{\hookrightarrow} \bP^n\times\bP^l\times H$ be the universal family over $H$,  and 
		$$\bP^n\stackrel{p_1}{\longleftarrow }\bP^n\times\bP^l\times H\stackrel{p_2}{\longrightarrow}  \bP^l.$$
		be the natural projections.
		Note that the $\text{PGL}_{n+1}\times\text{PGL}_{l+1}$ action on $\bP^n\times\bP^l$ induces a $\text{PGL}_{n+1}\times\text{PGL}_{l+1}$ action on $H$.
		Let $M_H:=i^*\cO_{\bP^ n\times\bP^l\times H}(1,1)$ and 
		$N_H:=i^*\cO_{\bP^ n\times\bP^l\times H}(1,-1)$ be the universal line bundles on $X_H$. 
		
		\textit{Step 2.} 
		In this step, we parametrize the boundary divisors in the moduli problem.	
		
		By \cite[Theorem 12.2.1 and Theorem 12.2.4]{grothendieckElementsGeometrieAlgebrique1966}, the locus $s\in H$ such that $X_s$ is geometrically connected and reduced, equidimensional, and geometrically normal is an open subscheme $H_1$ of $H$.
		
		Since $f_1:X_{H_1}\to H_1$ is equidimensional,
		and over reduced bases relative Mumford divisors are the same as K-ﬂat divisors \cite[Deﬁnition 7.1 and comment 7.4.2]{kollarFamiliesVarietiesGeneral2023},
		there is a separated $H_1$-scheme $\mathrm{MDiv}_{c}(X_{H_1}/H_1)$ of ﬁnite type which parametrizes relative Mumford divisors  of degree $c$  with respect to $M_{H_1}$ by \cite[Theorem 7.3]{kollarFamiliesVarietiesGeneral2023}.	
		Fixing  $\textbf{c}=(c_1,c_2,\ldots,c_m)\in\bN^m$, let $$H_2:=\mathrm{MDiv}_{c_1}(X_{H_1}/H_1)\times_{H_1}\mathrm{MDiv}_{c_2}(X_{H_1}/H_1)\times_{H_1}\cdots \times_{H_1}\mathrm{MDiv}_{c_m}(X_{H_1}/H_1)$$
		be the $m$-fold fiber product, we denote the universal family by
		$$(X_{H_2}, B_{H_2}=\sum\limits_{i=1}^m a_i D_{i,H_2}),N_{H_2}\to H_2,$$
		where $D_{i,H_2}$ are the universal families of relative Mumford divisors on $X_{H_2}$ of degree $c_i$  with respect to $M_{H_2}$ for $1\leq i\leq m$. 
		
		\textit{Step 3.} 
		By \cite[Theorem 4.8]{kollarFamiliesVarietiesGeneral2023}, there is a locally closed partial decomposition $H_3\to H_2$ satisfying the following: for any reduced scheme $W$ and morphism $q:W\to H_2$, then the family obtained by base change $f_W:(X_W,B_W)\to W$ is locally stable iff $q$ factors as $q:W\to H_3\to H_2$.
		
		Since $f_3:(X_{H_3},B_{H_3})\to H_3$ is locally stable,
		By \cite[Theorem 4.28]{kollarFamiliesVarietiesGeneral2023}, there is a locally closed partial decomposition $H_4\to H_3$ satisfying the following: for any reduced scheme $W$ and morphism $q:W\to H_3$, the divisorial pullback of $\tau(K_{X_{H_3}/H_3}+ B_{H_3})$ to $W \times _{H_3}X_{H_3}$ is Cartier iff $q$ factors as $q:W\to H_4\to H_3$.
		
		\textit{Step 4.} 
		Since the fibers $X_s$ of $f_4:X_{H_4}\to H_4$ are  reduced and connected by Step 2, we have $h^0(X_s,\cO_{X_s})=1$.
		Since $\tau(K_{X_{H_4}/H_4}+ B_{H_4})$  is Cartier by Step 3,
		by \cite[Lemma 1.19]{viehwegQuasiprojectiveModuliPolarized1995}, there is a locally closed subscheme $H_5\subset H_4$ with the following property: for any  scheme $W$ and morphism $q:W\to H_4$, 
		$$\cO_{X_W}(1,0)\sim_W N_W^{2I}\otimes \omega_{X_W/W}^{[2I\tau]}(2I\tau B_W)\text{ and } $$
		$$\cO_{X_W}(0,1)\sim_W N_W^{2I-1}\otimes \omega_{X_W/W}^{[2I\tau]}(2I\tau B_W)$$
		iff $q$ factors as $q:W\to H_5\to H_4$,
		where $\cO_{X_W} (1,0)$ and  $\cO_{X_W} (0,1)$ are the pullbacks of 
		$\cO_{\bP^ n\times\bP^l\times H_4}(1,0)$ and $\cO_{\bP^ n\times\bP^l\times H_4}(0,1)$ 
		to $X_W$, respectively. 
		
		\textit{Step 5.} 
		In this step, we cut the locus parametrizing $(d,\Phi,\Gamma,\sigma)$-traditional stable minimal models.
		
		(1). By \cite[Lemma 8.5]{birkarModuliAlgebraicVarieties2022}, there is  a locally closed subscheme $H_6\subset H_5$ such that for any $s\in H_6$, $K_{X_s}+ B_s$ is semi-ample defining a contraction $X_ s\to Z_s$.
		
		(2). Since ampleness and klt are open conditions, there is an open subscheme $H_7\subset H_6$ such that  $N_s+ \tau(K_{X_s}+B_s)$ is ample and $(X_s,B_s)$ is klt for  any $s\in H_7$.
		
		(3). By \cite[Lemma 8.7]{birkarModuliAlgebraicVarieties2022} (the condition of $N_s$ being effective is not required in the proof), there is  a locally closed subscheme $H_8\subset H_7$ such that for any $s\in H_8$, $\vol(N_s|_F)\in\Gamma$ for the general ﬁbres $F$ of $X_ s\to Z_s$.
		
	(4).	For each $s\in H_8$, since $K_{X_s}+ B_s$ is semi-ample and $N_s+ \tau(K_{X_s}+B_s)$ is ample,
		$K_{X_s}+B_s+tN_s$ is ample for each $t\in (0, \frac{1}{\tau}]$, then
		$$\theta_s(t)=\vol(K_{X_s}+ B_s + t N_s)=(K_{X_s}+ B_s + t N_s)^d$$
		is a polynomial in $t$ of degree $\leq d$ on the interval $(0, \frac{1}{\tau}]$. By Step 3(\romannumeral4)  of \cite[ Proof of Proposition 9.5]{birkarModuliAlgebraicVarieties2022}, 
		there is an open and closed subscheme $H_{9}\subset H_8$ such that $\theta_s(t)=\sigma(t)$ on the interval $(0, \frac{1}{\tau}]$. 
		
		Therefore,   $f_9:(X_{H_9}\subset \bP^n\times\bP^l\times H_9, B_{H_9}),N_{H_9}\to H_9$ is a family of $(d,\Phi,\Gamma,\sigma)$-traditional stable minimal models. For $j=1,2$, let $L_{j,H_9}$ be the strongly ample line bundles over $H_9$ as Notation \ref{nota:}. 
		Then $f_{9*}\cO_{X_{H_9}}(L_{1,H_9})$ and $f_{9*}\cO_{X_{H_9}}(L_{2,H_9})$ are locally free sheaves of rank $n+1$ and $l+1$, respectively. 
			Shrinking $H_9$, we may assume that they are free sheaves, and hence
		$$\bP(f_{9*}\cO_{X_{H_9}}(L_{1,H_9}))\cong\bP^n_{H_9} \text{ and   }\bP(f_{9*}\cO_{X_{H_9}}(L_{2,H_9}) )\cong\bP^l_{H_9}.$$

		\textit{Step 6.} In this step, we  will prove that
		$$\mathfrak{TS}_{h,n,l,\textbf{c}}\cong[H_{9}/\text{PGL}_{n+1}\times\text{PGL}_{l+1}].$$
		Then since $H_{9}$ is a ﬁnite type scheme and $[H_{9}/\text{PGL}_{n+1}\times\text{PGL}_{l+1}]$ is an algebraic stack,  $\mathfrak{TS}_{h,n,l,\textbf{c}}$
		is a finite type algebraic stack. 
		
		
		We follow the arguments of \cite[Theorem 3.1.17]{alperStacksModuli2023} and \cite[Proposition 3.9]{ascherModuliBoundaryPolarized2023}.
	By our construction,   the  universal family
	$f_9:(X_{H_{9}}\subset \bP^n\times\bP^l\times H_{9}, B_{H_{9}}),N_{H_{9}}\to H_{9}$
	is an object in $\mathfrak{TS}_{h,n,l,\textbf{c}}(H_{9})$, 
which induces  a morphism $H_9\to \mathfrak{TS}_{h,n,l,\textbf{c}}$, where this morphism just forgets the projective embeddings.
		Moreover, this morphism is $\text{PGL}_{n+1}\times\text{PGL}_{l+1}$-invariant, hence
		descends to a morphism $\Psi^{\text{pre}}:[H_{9}/\text{PGL}_{n+1}\times\text{PGL}_{l+1}]^{\text{pre}}\to \mathfrak{TS}_{h,n,l,\textbf{c}}$ of prestacks. 
		Since $\mathfrak{TS}_{h,n,l,\textbf{c}}$ is a stack by Lemma \ref{lem: is stack}, the universal property of stackiﬁcation \cite[Theorem 2.5.18]{alperStacksModuli2023} yields a morphism $\Psi:[H_9/\text{PGL}_{n+1}\times\text{PGL}_{l+1}]\to \mathfrak{TS}_{h,n,l,\textbf{c}}$. 
		

		To construct the inverse, 
		consider $(f:(X,B),A\to S)\in  \mathfrak{TS}_{h,n,l,\textbf{c}}(S)$, 
		since $f_*\cO_X(L_{1,S})$ and $f_*\cO_X(L_{2,S})$ are  locally free by Step 1, there exists an open cover $S=\cup_i S_i$ over which their restrictions are free. 
		Choosing trivializations induce embeddings $g_i:(X_{S_i},B_{S_i})\hookrightarrow \bP^n\times\bP^l\times S_i$.
Moreover,  	  we have  $A_{S_i}\sim_{S_i} N_{S_i}:=  g_i^*\cO_{\bP^n\times\bP^l\times S_i}(1,-1)$.		   
		    Hence by our construction of $H_{9}$, we have morphisms $\Phi_i:S_i\to H_{9}$. 
		Over the intersections $S_i\cap S_j$, the trivializations diﬀer by a section $s_{ij}\in H^0(S_i\cap S_j,\text{PGL}_{n+1}\times\text{PGL}_{l+1})$.
		Therefore the $\Phi_i$ glue to a morphism $\Phi :S\to[H_{9}/\text{PGL}_{n+1}\times\text{PGL}_{l+1}]$, 
		which induces a morphism $\mathfrak{TS}_{h,n,l,\textbf{c}}\to[H_{9}/\text{PGL}_{n+1}\times\text{PGL}_{l+1}]$, that is the inverse of $\Psi$.
	\end{proof}

	\subsection{Moduli space of traditional stable minimal models}
%
%
%
%
		We need the following  separatedness result to obtain the  coarse moduli space of traditional stable minimal models. 
	
%

	
	\begin{thm}\label{thm:separated}
	 Let  $f : (X, B), A \to C$ and $f\Prime  : (X\Prime , B\Prime), A\Prime \to C$ be two families of $(d,\Phi,\Gamma,\sigma)$-traditional stable minimal models over a smooth  curve $ C$.
Let $0 \in C$ be a closed point and $C^o := C \setminus \{ 0 \}$ the punctured curve. 
		Assume there exists an isomorphism 
		$$g^ o : ((X , B) , A) \times_C C^ o \to ((X\Prime , B\Prime ), A \Prime ) \times _C C ^o $$
		over $C^o$,
		then $g ^o$ can be extended to an isomorphism $g : (X , B  ) , A\to (X \Prime , B\Prime ) , A \Prime$ over $C$.
	\end{thm}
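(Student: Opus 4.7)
The plan is to follow the standard separatedness strategy for minimal models, adapted to the polarized setting. The approach has two stages: first extend $g^o$ to an isomorphism of the underlying families of klt pairs over $C$, and then verify that the polarization is preserved.

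To begin, I would form the scheme-theoretic closure $\Gamma \subset X \times_C X'$ of the graph of $g^o$ and take a common log resolution $\mu \colon W \to \Gamma$, producing proper birational morphisms $p \colon W \to X$ and $p' \colon W \to X'$ that restrict to $g^o$ over $C^o$. Choose a rational number $0 < t \ll 1$ for which both $K_X + B + tA$ and $K_{X'} + B' + tA'$ are ample $\bQ$-Cartier, which is possible by the definition of a traditional stable minimal model. Form the vertical $\bQ$-divisor
\[
D \;=\; p^\ast(K_X + B + tA) \;-\; {p'}^\ast(K_{X'} + B' + tA') \qquad\text{on } W.
\]
Because $g^o$ is an isomorphism of polarized pairs over $C^o$, the divisor $D$ is supported on the central fiber $W_0$ of $W \to C$, and in particular is vertical over $C$.

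The main technical step is to prove $D = 0$. I would invoke the negativity lemma in the form used by Boucksom--Hacon--McKernan--Paun (as in \cite{boucksomRemarksMinimalModels2014}) and refined in \cite{hashizumeBoundednessModuliSpaces2023}: since $p^\ast(K_X + B + tA)$ is the pullback of a $\bQ$-Cartier ample divisor from the klt pair $(X,B)$, every $p$-exceptional component has nonnegative coefficient in $-D$, and the symmetric statement holds for $p'$. Combining both inequalities together with the fact that $D$ is vertical over the single point $0 \in C$ forces $D = 0$. Since $K_{X/C} + B + tA$ is relatively ample over $C$, one recovers $X = \operatorname{Proj}_C \bigoplus_{m \gg 0} f_\ast \cO_X(m(K_{X/C} + B + tA))$, and similarly for $X'$. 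The equality $D = 0$, together with the $C^o$-identification coming from $g^o$, promotes the graded $\cO_C$-algebras to being canonically isomorphic, giving a $C$-isomorphism $g \colon (X,B) \to (X',B')$ extending $g^o$ that satisfies $g_\ast B = B'$.

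Finally I would upgrade $g$ to an isomorphism of polarized objects. By construction $g^\ast A' \sim_{C^o} A$ on $X \times_C C^o$, so the line bundle $\cL := g^\ast A' \otimes A^{-1}$ on $X$ is trivial over $C^o$. Using that $f \colon X \to C$ is flat with geometrically connected fibers and $f_\ast \cO_X = \cO_C$, the seesaw principle applied to the family of line bundles $\cL$ shows that $\cL$ is pulled back from $C$, i.e., $g^\ast A' \sim_C A$, as required. The main obstacle I expect is the negativity step: one needs enough control on $D$ as a vertical divisor to apply the negativity lemma over a localization of $C$ at $0$, exploiting the klt hypothesis on both central fibers $(X_0, B_0)$ and $(X'_0, B'_0)$ together with the relative ampleness of $K + B + tA$; this is exactly the content imported from \cite{boucksomRemarksMinimalModels2014,hashizumeBoundednessModuliSpaces2023}.
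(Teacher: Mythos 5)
Your proposal is correct and follows essentially the same route as the paper, which simply cites Hashizume--Hattori's Proposition 4.4 after exhibiting the relatively ample Cartier divisor $L = A + \tau(K_{X/C}+B)$; you unpack that reference (graph closure, log resolution, vertical difference $D$, negativity lemma, recovery of the family as $\operatorname{Proj}$ of the relative section ring) and then add the seesaw step for the polarization, which is also implicit in the paper's definition of $C$-isomorphism of polarized families.

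One small point worth flagging: you justify the existence of a single rational $t>0$ making both $K_{X/C}+B+tA$ and $K_{X'/C}+B'+tA'$ relatively ample over $C$ ``by the definition of a traditional stable minimal model.'' The definition only gives a fiberwise $t=t(s)$, and a priori it could degenerate along $C$. The paper avoids this by using the \emph{uniform} $\tau$ supplied by boundedness (Lemma~\ref{lem:Hilbert poly}, which relies on Theorem~\ref{thm:boundedness}), so that $L=A+\tau(K_{X/C}+B)$ is an honest $f$-ample Cartier divisor. Alternatively one can argue by openness of ampleness: choose $t$ small enough to work at the central fiber $0$ and shrink $C$ to a neighborhood of $0$; this suffices since the statement is local around $0$. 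Either fix makes your argument complete, but as written the step is a little glib.
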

	\begin{proof}
		Consider  $L:=A\Prime+\tau(K_{X/C}+B)$ and $L\Prime:=A\Prime+\tau(K_{X\Prime/C}+B\Prime)$, where $\tau$ is the positive natural number as Lemma \ref{lem:Hilbert poly}.  By the proof of   Lemma \ref{lem:locally constant of hilbert poly}, $L$  is an $f$-ample  Cartier divisor on $X$ (resp.  $L\Prime$ is an $f\Prime$-ample  Cartier divisor on $X\Prime$).
		Let $g : X \dasharrow X \Prime$ be the birational map induced by $g^o$,
		then by the  same argument as in  \cite[Proof of Proposition 4.4]{hashizumeBoundednessModuliSpaces2025},
		$g$ is an isomorphism over $C$.
	\end{proof}
	\begin{cor}\label{cor:aut is finite}
		For any $(X, B), A\in\mathfrak{TS}_{klt}(d, \Phi , \Gamma, \sigma)(\mathbbm{k})$,  $\Aut((X, B), A)$ is ﬁnite.
	\end{cor}
	\begin{proof}
		It  follows from Theorem \ref{thm:separated} and the argument of  \cite[Proof of Corollary 3.5]{blumUniquenessKpolystableDegenerations2019}. 
	\end{proof}
	
		\begin{proof}[\textbf{Proof of Theorem }\ref{thm:moduli}]
			By	Theorem \ref{thm: Artin stack} and Lemma \ref{lem:subfunctor},  $\mathfrak{TS}_{klt}(d,\Phi,\Gamma,\sigma)$ is an algebraic  stack of finite type.
			By Corollary \ref{cor:aut is finite} and \cite[Theorem 3.6.4]{alperStacksModuli2023}, $\mathfrak{TS}_{klt}(d,\Phi,\Gamma,\sigma)$  is a Deligne-Mumford stack.
			Moreover, Theorem \ref{thm:separated}  and \cite[Theorem 3.8.2(3)]{alperStacksModuli2023} imply that $\mathfrak{TS}_{klt}(d,\Phi,\Gamma,\sigma)$ is a separated Deligne-Mumford stack of finite type. Therefore, we may apply the Keel–Mori's theorem \cite{keelQuotientsGroupoids1997}\cite[Theorem 4.3.12]{alperStacksModuli2023} to see that $\mathfrak{TS}_{klt}(d,\Phi,\Gamma,\sigma)$ has a coarse moduli space $TS_{klt}(d,\Phi,\Gamma,\sigma)$, which is a separated algebraic space.
		\end{proof}
		%
		

	\bibliographystyle{alphaurl}
		\bibliography{klt-smm}

\newcommand{\etalchar}[1]{$^{#1}$}
\begin{thebibliography}{ELM{\etalchar{+}}06}

\bibitem[ABB{\etalchar{+}}23]{ascherModuliBoundaryPolarized2023}
Kenneth Ascher, Dori Bejleri, Harold Blum, Kristin DeVleming, Giovanni Inchiostro, Yuchen Liu, and Xiaowei Wang.
\newblock Moduli of boundary polarized {{Calabi-Yau}} pairs, 2023.
\newblock \href {https://arxiv.org/abs/2307.06522} {\path{arXiv:2307.06522}}.

\bibitem[Alp25]{alperStacksModuli2023}
Jarod Alper.
\newblock Stacks and {{Moduli}}, 2025.
\newblock \href{https://sites.math.washington.edu/~jarod/moduli-versions/moduli-3-18-24.pdf}{\textsf{Book available online, 3-18-2024 version}}.

\bibitem[Amb03]{ambroQuasilogVarieties2003}
F.~Ambro.
\newblock Quasi-log varieties.
\newblock {\em Tr. Mat. Inst. Steklova}, 240:220--239, 2003.

\bibitem[Amb04]{ambroShokurovBoundaryProperty2004}
Florin Ambro.
\newblock Shokurov's boundary property.
\newblock {\em J. Differential Geom.}, 67(2):229--255, 2004.

\bibitem[BDCS24]{birkarBoundednessEllipticCalabiYau2024}
Caucher Birkar, Gabriele Di~Cerbo, and Roberto Svaldi.
\newblock Boundedness of elliptic {{Calabi-Yau}} varieties with a rational section.
\newblock {\em J. Differential Geom.}, 128(2):463--519, 2024.

\bibitem[BH22]{birkarVariationsGeneralisedPairs2022}
Caucher Birkar and Christopher~D. Hacon.
\newblock Variations of generalised pairs, 2022.
\newblock \href {https://arxiv.org/abs/2204.10456} {\path{arXiv:2204.10456}}.

\bibitem[Bir19]{birkarAntipluricanonicalSystemsFano2019}
Caucher Birkar.
\newblock Anti-pluricanonical systems on {{Fano}} varieties.
\newblock {\em Ann. of Math. (2)}, 190(2):345--463, 2019.

\bibitem[Bir21a]{birkarBoundednessVolumeGeneralised2021}
Caucher Birkar.
\newblock Boundedness and volume of generalised pairs, 2021.
\newblock \href {https://arxiv.org/abs/2103.14935} {\path{arXiv:2103.14935}}.

\bibitem[Bir21b]{birkarSingularitiesLinearSystems2021}
Caucher Birkar.
\newblock Singularities of linear systems and boundedness of {{Fano}} varieties.
\newblock {\em Ann. of Math. (2)}, 193(2):347--405, 2021.

\bibitem[Bir22]{birkarModuliAlgebraicVarieties2022}
Caucher Birkar.
\newblock Moduli of algebraic varieties, 2022.
\newblock \href {https://arxiv.org/abs/2211.11237} {\path{arXiv:2211.11237}}.

\bibitem[Bir23a]{birkarGeometryPolarisedVarieties2023}
Caucher Birkar.
\newblock Geometry of polarised varieties.
\newblock {\em Publ. Math. Inst. Hautes {\'E}tudes Sci.}, 137:47--105, 2023.

\bibitem[Bir23b]{birkarSingularitiesFanoFibrations2023}
Caucher Birkar.
\newblock Singularities on {{Fano}} fibrations and beyond, 2023.
\newblock \href {https://arxiv.org/abs/2305.18770} {\path{arXiv:2305.18770}}.

\bibitem[Bir24]{birkarBoundednessModuliAlgebraic2024}
Caucher Birkar.
\newblock Boundedness and moduli of algebraic varieties.
\newblock {\em Surv. Differ. Geom.}, 26(1):373--386, 2024.

\bibitem[BX19]{blumUniquenessKpolystableDegenerations2019}
Harold Blum and Chenyang Xu.
\newblock Uniqueness of {{K-polystable}} degenerations of {{Fano}} varieties.
\newblock {\em Ann. of Math. (2)}, 190(2):609--656, 2019.

\bibitem[EFG{\etalchar{+}}25]{engelBoundednessFiberedKtrivial2025}
Philip Engel, Stefano Filipazzi, Fran{\c c}ois Greer, Mirko Mauri, and Roberto Svaldi.
\newblock Boundedness of some fibered {{K-trivial}} varieties, 2025.
\newblock \href {https://arxiv.org/abs/2507.00973} {\path{arXiv:2507.00973}}.

\bibitem[ELM{\etalchar{+}}06]{einAsymptoticInvariantsBase2006}
Lawrence Ein, Robert Lazarsfeld, Mircea Musta{\c t}{\u a}, Michael Nakamaye, and Mihnea Popa.
\newblock Asymptotic invariants of base loci.
\newblock {\em Ann. Inst. Fourier (Grenoble)}, 56(6):1701--1734, 2006.

\bibitem[FG14]{fujinoModuliBdivisorsLctrivial2014}
Osamu Fujino and Yoshinori Gongyo.
\newblock On the moduli b-divisors of lc-trivial fibrations.
\newblock {\em Universit{\'e} de Grenoble. Annales de l'Institut Fourier}, 64(4):1721--1735, 2014.

\bibitem[FHS24]{filipazziBoundednessEllipticCalabi2024}
Stefano Filipazzi, Christopher~D. Hacon, and Roberto Svaldi.
\newblock Boundedness of elliptic {{Calabi}}--{{Yau}} threefolds.
\newblock {\em J. Eur. Math. Soc. (JEMS)}, 2024.

\bibitem[FS20]{filipazziInvariancePlurigeneraBoundedness2020}
Stefano Filipazzi and Roberto Svaldi.
\newblock Invariance of plurigenera and boundedness for generalized pairs.
\newblock {\em Mat. Contemp.}, 47:114--150, 2020.

\bibitem[Fuj11]{fujinoFundamentalTheoremsLog2011}
Osamu Fujino.
\newblock Fundamental theorems for the log minimal model program.
\newblock {\em Publ. Res. Inst. Math. Sci.}, 47(3):727--789, 2011.

\bibitem[Fuj17]{fujinoEffectiveBasepointfreeTheorem2017}
Osamu Fujino.
\newblock Effective basepoint-free theorem for semi-log canonical surfaces.
\newblock {\em Publ. Res. Inst. Math. Sci.}, 53(3):349--370, 2017.

\bibitem[Gro66]{grothendieckElementsGeometrieAlgebrique1966}
A.~Grothendieck.
\newblock {\'E}l{\'e}ments de g{\'e}om{\'e}trie alg{\'e}brique. {{IV}}. {{{\'E}tude}} locale des sch{\'e}mas et des morphismes de sch{\'e}mas. {{III}}.
\newblock {\em Inst. Hautes {\'E}tudes Sci. Publ. Math.}, (28):255, 1966.

\bibitem[Har77]{hartshorneAlgebraicGeometry1977}
Robin Hartshorne.
\newblock {\em Algebraic Geometry}, volume No. 52 of {\em Graduate {{Texts}} in {{Mathematics}}}.
\newblock Springer-Verlag, New York-Heidelberg, 1977.

\bibitem[HH25]{hashizumeBoundednessModuliSpaces2025}
Kenta Hashizume and Masafumi Hattori.
\newblock On boundedness and moduli spaces of {{K-stable Calabi-Yau}} fibrations over curves.
\newblock {\em Geometry \& Topology}, 29(3):1619--1691, 2025.

\bibitem[HJ25]{haconFailureBoundednessGeneralised2025}
Christopher Hacon and Xiaowei Jiang.
\newblock Failure of {{Boundedness}} for {{Generalised Log Canonical Surfaces}}, 2025.
\newblock \href {https://arxiv.org/abs/2504.06482} {\path{arXiv:2504.06482}}.

\bibitem[HMX13]{haconBirationalAutomorphismsVarieties2013}
Christopher~D. Hacon, James McKernan, and Chenyang Xu.
\newblock On the birational automorphisms of varieties of general type.
\newblock {\em Ann. of Math. (2)}, 177(3):1077--1111, 2013.

\bibitem[HMX14]{haconACCLogCanonical2014}
Christopher~D. Hacon, James McKernan, and Chenyang Xu.
\newblock {{ACC}} for log canonical thresholds.
\newblock {\em Ann. of Math. (2)}, 180(2):523--571, 2014.

\bibitem[HMX18]{haconBoundednessModuliVarieties2018}
Christopher~D. Hacon, James McKernan, and Chenyang Xu.
\newblock Boundedness of moduli of varieties of general type.
\newblock {\em J. Eur. Math. Soc. (JEMS)}, 20(4):865--901, 2018.

\bibitem[Jia25]{jiaoBoundednessPolarizedLog2025}
Junpeng Jiao.
\newblock Boundedness of polarized log {{Calabi-Yau}} fibrations.
\newblock {\em Journal of Differential Geometry}, 130(3):635--675, 2025.

\bibitem[JJZ25]{jiangBoundednessPolarizedLog2025}
Xiaowei Jiang, Junpeng Jiao, and Minzhe Zhu.
\newblock Boundedness of polarized log {{Calabi-Yau}} fibrations with bounded bases, 2025.
\newblock \href {https://arxiv.org/abs/2504.05243} {\path{arXiv:2504.05243}}.

\bibitem[KM97]{keelQuotientsGroupoids1997}
Se{\'a}n Keel and Shigefumi Mori.
\newblock Quotients by groupoids.
\newblock {\em Ann. of Math. (2)}, 145(1):193--213, 1997.

\bibitem[KM98]{kollarBirationalGeometryAlgebraic1998}
J{\'a}nos Koll{\'a}r and Shigefumi Mori.
\newblock {\em Birational Geometry of Algebraic Varieties}, volume 134 of {\em Cambridge {{Tracts}} in {{Mathematics}}}.
\newblock Cambridge University Press, Cambridge, 1998.

\bibitem[Kol93]{kollarEffectiveBasePoint1993}
J{\'a}nos Koll{\'a}r.
\newblock Effective base point freeness.
\newblock {\em Math. Ann.}, 296(4):595--605, 1993.

\bibitem[Kol23]{kollarFamiliesVarietiesGeneral2023}
J{\'a}nos Koll{\'a}r.
\newblock {\em Families of Varieties of General Type}, volume 231 of {\em Cambridge {{Tracts}} in {{Mathematics}}}.
\newblock Cambridge University Press, Cambridge, 2023.

\bibitem[Laz04]{lazarsfeldPositivityAlgebraicGeometry2004a}
Robert Lazarsfeld.
\newblock {\em Positivity in Algebraic Geometry. {{I}}}, volume~48 of {\em Ergebnisse Der {{Mathematik}} Und Ihrer {{Grenzgebiete}}.}
\newblock Springer-Verlag, Berlin, 2004.

\bibitem[Li24]{liBoundednessBaseVarieties2024}
Zhan Li.
\newblock Boundedness of the base varieties of certain fibrations.
\newblock {\em Journal of the London Mathematical Society. Second Series}, 109(2):Paper No. e12871, 25, 2024.

\bibitem[LM09]{lazarsfeldConvexBodiesAssociated2009}
Robert Lazarsfeld and Mircea Musta{\c t}{\u a}.
\newblock Convex bodies associated to linear series.
\newblock {\em Ann. Sci. {\'E}c. Norm. Sup{\'e}r. (4)}, 42(5):783--835, 2009.

\bibitem[PS09]{prokhorovSecondMainTheorem2009}
{\relax Yu}.~G. Prokhorov and V.~V. Shokurov.
\newblock Towards the second main theorem on complements.
\newblock {\em J. Algebraic Geom.}, 18(1):151--199, 2009.

\bibitem[Rud76]{rudinPrinciplesMathematicalAnalysis1976}
Walter Rudin.
\newblock {\em Principles of Mathematical Analysis}.
\newblock International {{Series}} in {{Pure}} and {{Applied Mathematics}}. McGraw-Hill Book Co., New York-Auckland-D{\"u}sseldorf, third edition, 1976.

\bibitem[Sak84]{sakaiWeilDivisorsNormal1984}
Fumio Sakai.
\newblock Weil divisors on normal surfaces.
\newblock {\em Duke Math. J.}, 51(4):877--887, 1984.

\bibitem[She23]{shentuArakelovTypeInequalities2023}
Junchao Shentu.
\newblock Arakelov {{Type Inequalities}} and {{Deformation Boundedness}} of polarized varieties, 2023.
\newblock \href {https://arxiv.org/abs/2302.10200} {\path{arXiv:2302.10200}}.

\bibitem[Vie95]{viehwegQuasiprojectiveModuliPolarized1995}
Eckart Viehweg.
\newblock {\em Quasi-Projective Moduli for Polarized Manifolds}, volume~30 of {\em Ergebnisse Der {{Mathematik}} Und Ihrer {{Grenzgebiete}}}.
\newblock Springer-Verlag, Berlin, 1995.

\bibitem[Zhu25]{zhuBoundednessStableMinimal2025}
Minzhe Zhu.
\newblock Boundedness of {{Stable Minimal Models With KLT Singularities}}.
\newblock {\em International Mathematics Research Notices}, 2025(2):rnae293, 2025.

\end{thebibliography}
		
	\end{document}